\newtheorem{theorem}{Theorem}[section] \newtheorem{proposition}[theorem]{Proposition}
\newtheorem{lemma}[theorem]{Lemma} \newtheorem{corollary}[theorem]{Corollary}
\theoremstyle{definition} \newtheorem{definition}[theorem]{Definition}
\newtheorem{example}[theorem]{Example}
\theoremstyle{remark} \newtheorem{remark}[theorem]{Remark} \numberwithin{equation}{section}
\numberwithin{figure}{section}
\newcommand{\Func}[1]{\mathcal{O}_{#1}}
\newcommand{\GL}{\mathrm{GL}}
\newcommand{\g}{\mathfrak{g}}
\newcommand{\GLR}{\mathrm{GL}_n\mathbb{R}}
\newcommand{\Hom}{\mathrm{Hom}}  
\newcommand{\Ad}{\mathrm{Ad}} 
\newcommand{\AD}{\mathit{Conj}}
\newcommand{\T}{\mathit{T}}
\newcommand{\Op}{\Theta}
\newcommand{\Hol}{\mathrm{Hol}}
\newcommand{\pr}{\mathrm{Pr}}
\newcommand{\e}{E}
\newcommand{\so}[1]{{#1}^\wedge}
\newcommand{\ta}[1]{{#1}^\vee}
\newcommand{\brac}[2]{\{{#1},{#2}\}}
\newcommand{\bi}[2]{({#1}\mid{#2})}
\DeclareMathOperator{\Tr}{Tr}
\begin{document}

\title[The quasi-Poisson Goldman formula]{The quasi-Poisson Goldman formula} \author[X. Nie]{Xin Nie} \address{Section de Math\'ematiques, 2-4 rue du Li\`evre, C.P. 64, 1211 Gen\`eve 4, Switzerland} \email{xin.nie@unige.ch}

\keywords{surface group representations, quasi-Poisson manifolds, group-valued moment maps}

\maketitle

\begin{abstract} We prove a quasi-Poisson bracket formula for the space of representations of the fundamental groupoid of a surface with boundary, which generalizes Goldman's Poisson bracket formula. We also deduce a similar formula for quasi-Poisson cross-sections. 
\end{abstract}

\tableofcontents

\section*{Introduction}
Let $\Sigma$ be a closed oriented surface and $G$ a Lie group with a fixed invariant scalar product on its Lie algebra. Ignoring a singular part, the moduli space $X_G(\Sigma)$ of flat connections on principal $G$-bundles over $\Sigma$, well known to be identified with $\Hom(\pi_1(\Sigma), G)/G$, is a symplectic manifold \cite{atiyah-bott, goldman_nature}. If $\Sigma$ is a bordered surface, i.e., has non-empty boundary, then the symplectic structure generalizes to a Poisson structure. 

For $\Sigma$ closed, Goldman \cite{goldman_invariant} provided a nice Poisson bracket formula for certain functions on $X_G(\Sigma)$.  When $G=\GL_n\mathbb{R}$, the formula leads to the so-called Goldman algebra of loops on a surface. The same result for $\partial\Sigma\neq\emptyset$, although widely believed to be true, was given a proof first in \cite{lawton}.

On the other hand, when $\partial\Sigma\neq\emptyset$, the quasi-Poisson theory of moduli spaces \cite{AMM, AKM} provides the following finite-dimensional construction of the Poisson structure on $X_G(\Sigma)$. We assume that $\partial\Sigma$ has $b$ components and choose a marked point $p_i$ on each of them. Consider the space of fundamental groupoid representations $$M_G(\Sigma):=\Hom(\pi_1(\Sigma; p_1,\cdots, p_b),G),$$ 
which has a natural $G^b$-action and the quotient is $M_G(\Sigma)/G^b=X_G(\Sigma)$. Then the space of functions on $M_G(\Sigma)$, denoted by $\Func{M_G(\Sigma)}$, is equipped with a canonical quasi-Poisson $G^b$-bracket $\{\cdot,\cdot\}_{M_G(\Sigma)}$ whose restriction to invariant functions $\Func{M_G(\Sigma)}^{G^b}=\Func{X_G(\Sigma)}$ gives the Poisson structure of $X_G(\Sigma)$. 

Our main result, Theorem \ref{theorem_quasibracket}, is a quasi-Poisson bracket formula for functions on $M_G(\Sigma)$, which generalizes Goldman's formula. When $G=\GL_n\mathbb{R}$, the formula leads to a quasi-Poisson version of the Goldman algebra, which was first discovered  by Massuyeau and Turaev \cite{massuyeau-turaev} using a different approach.

As a corollary, when $G$ is compact, we deduce a quasi-Poisson bracket formula (Theorem \ref{theorem_crossbracket}) for functions on the cross-section
$$
L=\bigcap_{i=1}^b\mu_i^{-1}(U)\subset M_G(\Sigma),
$$
where $\mu_i: M_G(\Sigma)\rightarrow G$ is the holonomy of the $i$-th component of $\partial\Sigma$ (with reversed orientation), and $U\subset G$ is a certain cross-section of the conjugation $G$-action on itself, so that $L$ is a Hamiltonian quasi-Poisson $H^b$-manifold for a subgroup $H\subset G$, see \S \ref{subsection_cross}. 

The present work was motived by a relationship between $L$ and some particular rational functions on $X_G(\Sigma)$ in the case $G=\mathrm{SL}_n(\mathbb{R})$ \cite{fock-goncharov, labourie_swapping}. This aspect will appear in the author's thesis \cite{nie_these} and a forthcoming paper.

As this paper was being prepared for release, Li-Bland and \v Severa independently released the paper \cite{libland-severa}, whose results overlap with ours.

\subsection*{Acknowledgements} 
The author is grateful to Anton Alekseev, David Li-Bland and Pavol \v{S}evera for their interest in this work and useful discussions, and to his thesis advisor Gilles Courtois for carefully reading the first draft of this paper and valuable comments.

\section{Quasi-Poisson theory}\label{section_quasipoissontheory}
In this preliminary section we recall a version of the quasi-Poisson theory \cite{AMM, AKM}.  The presentation here can be viewed as a simplified version of a more general framework \cite{severa, libland-severa}.

\subsection{Fundamental groupoids of surfaces}\label{subsection_surface}  
Let $\Sigma$ be a compact oriented surface such that $\partial \Sigma$ has $b\geq 1$ boundary components and a marked point $p_i$ is chosen on each of them. But a \emph{path} on $\Sigma$ we always mean an oriented smooth curve whose starting and ending points belong to $\{p_1,\cdots, p_b\}$. Let 
$$\pi_1(\Sigma):=\pi_1(\Sigma; p_1,\cdots, p_b).$$ 
denote the fundamental groupoid of $\Sigma$, i.e., the set of (end-points-fixing) homotopy classes of paths, equipped with the obvious partial multiplication. 

Fix a Lie group $G$. A \emph{representation} of $\pi_1(\Sigma)$ in $G$ is by definition a map $\pi_1(\Sigma)\rightarrow G$ preserving multiplications. Let the space of all representations be denoted by $$M_G(\Sigma):=\Hom(\pi_1(\Sigma),G).$$ 

If $\alpha$ is a path, the \emph{holonomy} along $\alpha$ is the map
$$\Hol_\alpha: M_G(\Sigma)\rightarrow G,\quad m\mapsto m(\alpha).$$

Let $\beta_i$ denote the $i$-th boundary loop (with induced orientation). The $i$-th \emph{reversed boundary holonomy} $$\mu_i:=\Hol_{\beta_i^{-1}}$$
will play a special role later on.


There is a natural $G^b$-action on $M_G(\Sigma)$ given by
$$
((g_1,\cdots, g_b).m)(\alpha)=g_im(\alpha)g_j^{-1}$$
for any $m\in M_G(\Sigma)$ and path $\alpha$ going from $p_i$ to $p_j$. 
In other words, if $\alpha$  starts and ends both at $p_i$ (resp. only starts/ends at $p_i$), then $\Hol_\alpha$ is equivariant with respect to the $i$-th $G$-action on $M_G(\Sigma)$ and the $G$-action on itself by conjugation (resp. left/right multiplication).

It is easy to see that the map
$$
M_G(\Sigma)\rightarrow \Hom(\pi_1(\Sigma; p_1),G)
$$
induced by the injection $\pi_1(\Sigma; p_1)\rightarrow \pi_1(\Sigma)$ is a principle $G^{b-1}$-bundle, hence the quotient $M_G(\Sigma)/G^b$ is identified with $X_G(\Sigma)=\Hom(\pi_1(\Sigma; p_1),G)/G$.

If $\Sigma=\Sigma_{g,b}$ is the connected surface with genus $g$
and $b$ boundary components, a set of $2(b-1)+2g$ generators of $\pi_1(\Sigma)$ without relations can be constructed
as follows. Take a path $\alpha_i$ from $p_1$ to $p_i$ for each $2\leq i\leq b$, and two loops
$\gamma_j, \delta_j$ based at $p_1$ for each $1\leq j\leq g$, such that by cutting the surface along
these paths we obtain a polygon whose edges are successively
$$ 
\beta_1, \alpha_2, \beta_2, \alpha_2^{-1},\cdots,
\alpha_b, \beta_b, \alpha_b^{-1},\gamma_1, \delta_1, \gamma_1^{-1}, \delta_1^{-1},\cdots,\gamma_g,
\delta_g,\gamma_g^{-1}, \delta_g^{-1},
$$
see the second picture of Figure \ref{figure_splitting}. Then $\{\alpha_i, \beta_i, \gamma_j, \delta_j\}$ form the required generators.


 As a consequence, put $$u_i=\Hol_{\alpha_i}, \ v_i=\Hol_{\beta_i},\  a_j=\Hol_{\gamma_j},\  b_j=\Hol_{\delta_j},$$ then
 $(u_i, v_i, a_j, b_j)$ form a $G$-valued coordinates system of $M_G(\Sigma)$, which identifies $M_G(\Sigma)$ with $G^{2(b-1)+2g}$. Clearly, $\mu_i=v_i^{-1}$ ($2\leq i\leq b$) and $$\mu_1=u_2v_2u_2^{-1}\cdots u_bv_bu_b^{-1}[a_1,b_1]\cdots[a_g,b_g],$$
where $[a,b]=aba^{-1}b^{-1}$ is the commutator.
 
Here are two simplest examples
\begin{example}[$\Sigma_{0,2}$, the cylinder]\label{example_annulus}
The coordinates system $(u,v)=(u_2,v_2)$ identifies $M_G(\Sigma_{0,2})$ with $G^2$, on which the $G^2$ action reads 
$$(u,v)\overset{(g_1,g_2)}\longmapsto (g_1ug_2^{-1},g_2vg_2^{-1}),$$
and we have
$$
\mu_1(u,v)=uvu^{-1}, \mu_2(u,v)=v^{-1}.
$$
\end{example}

\begin{example}[$\Sigma_{1,1}$, the one-holed torus]\label{example_oneholedtorus}
The coordinates system $(a,b)=(a_1,b_1)$ identifies $M_G(\Sigma_{1,1})$ with $G^2$.  
The canonical $G$-action is the conjugation action on both factors. Put $\mu=\Hol_{\beta^{-1}}$ where $\beta$ is the boundary loop,  then $\mu(a,b)=[a,b]$.
\end{example}

%
%

\subsection{Splitting}\label{subsection_splitting} By a \emph{splitting arc} on
$\Sigma$ we mean an embedded segment $\Delta$ which joins some marked point, say, $p_1$, with another point on the same boundary component $\beta_1$.

Splitting $\Sigma$ along $\Delta$ and splitting $p_1$ into two marked points $p_{1}^L$ and $p_{1}^R$, we get a surface $\Sigma_\Delta$ with $b+1$ boundary components, each of them still having a marked point. See the following picture.
\begin{figure}[h] 
\centering \includegraphics[width=3in]{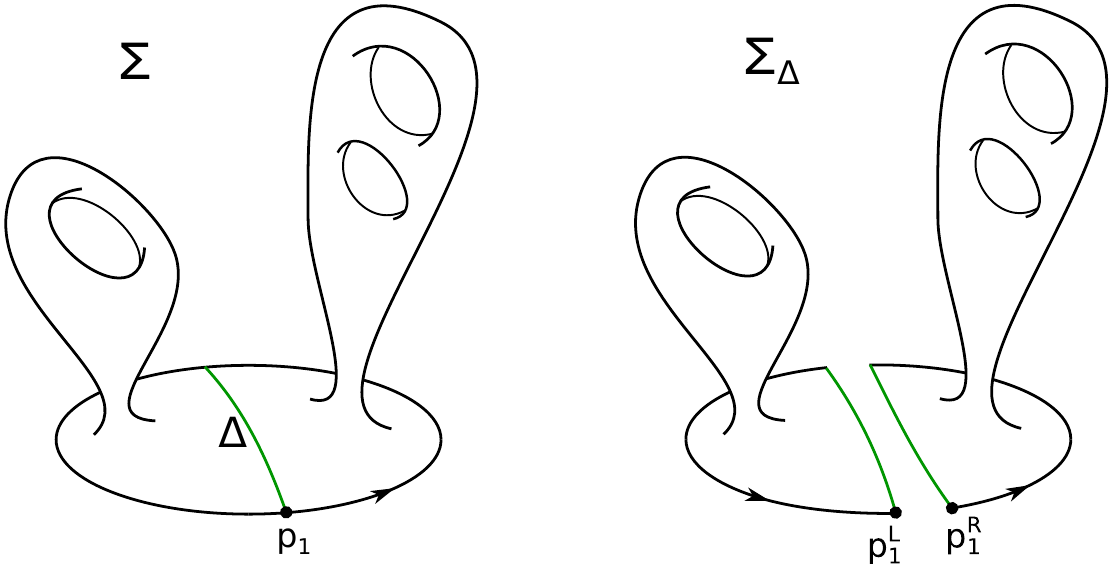} 
\end{figure}

\begin{remark}
The two particular marked points $p_1^L, p_1^R\in\partial\Sigma_\Delta$ have a left-right distinction. Indeed, identifying a neighborhood $U$ of $p_1$ in $\Sigma$ with the upper half-plan in an orientation-preserving manner, we assume that after splitting $U$ into two,  $p_1^L$ is on the left half and $p_1^R$ on the right.
\end{remark}

The gluing map $\Sigma_\Delta\rightarrow\Sigma$ induces a map between fundamental groupoids, and hence a map between representation spaces
$$R_\Delta:M_G(\Sigma)\longrightarrow M_G(\Sigma_\Delta). $$

By a suitable adaptation of the Van Kampen theorem, one can prove that $R_\Delta$ is bijective. Identifying $M_G(\Sigma)$ and $M_G(\Sigma_\Delta)$ via $R_\Delta$, it is easy to see that 
\begin{itemize}
\item The $G^b$-action on $M_G(\Sigma)$ is induced by the $G^{b+1}$-action on $M_G(\Sigma_\Delta)$ and the diagonal embedding of the first factor
$$
G^b\hookrightarrow G^{b+1},\quad (g_1,g_2, \cdots, g_b)\mapsto (g_1,g_1, g_2,\cdots, g_b).
$$
\item Consider the maps $\mu_1:M_G(\Sigma)\rightarrow G$ and 
$$
\mu_1^L:=\Hol_{(\beta_1^L)^{-1}}, \  \mu_1^R:=\Hol_{(\beta_1^R)^{-1}}: M_G(\Sigma_\Delta)\longrightarrow G,
$$ 
where $\beta_1^L$ (resp. $\beta_1^R$) is the boundary loop of $\Sigma_\Delta$ at $p_1^L$ (resp. $p_1^R$), then $$\mu_1=\mu_1^L\cdot\mu_1^R.$$
Here and below, the product of two maps $M\rightarrow G$ is pointwise defined using the product in $G$.
\end{itemize}

\begin{example}\label{example_splittingoneholedtorus}
There is a splitting arc $\Delta$ on $\Sigma=\Sigma_{1,1}$ such that the standard generators $\gamma, \delta$ of $\pi_1(\Sigma_{1,1})$ becomes standard generators of $\Sigma_\Delta=\Sigma_{0,2}$, see the first picture of Figure \ref{figure_splitting}. With the coordinates of Example \ref{example_annulus} and \ref{example_oneholedtorus}, we have
$$
\begin{array}{rcl}
R_\Delta:M_G(\Sigma_{1,1})&\overset\sim\rightarrow& M_G(\Sigma_{0,2})\\
(a,b)&\mapsto &(u,v)=(a,b).
\end{array}
$$
Moreover, we have $\mu=\mu_1\cdot\mu_2$.
\begin{figure}[h] 
\centering 
\includegraphics[width=1.8in]{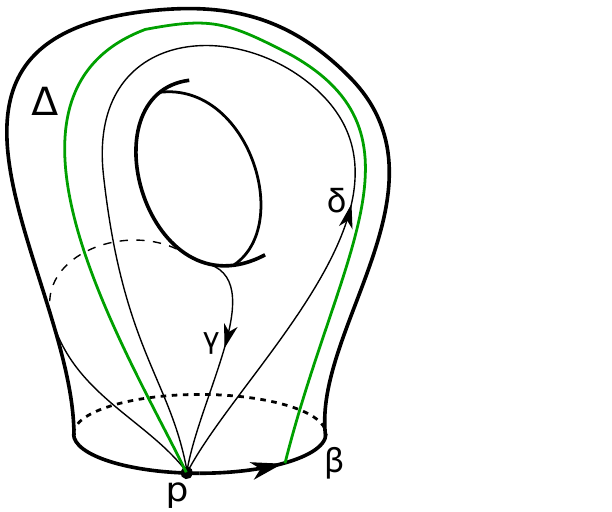} 
\includegraphics[width=2.1in]{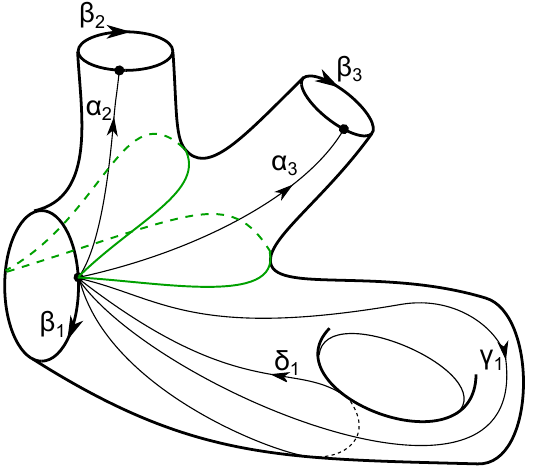} 
\caption{Splitting $\Sigma_{1,1}$ and $\Sigma_{g,b}$}\label{figure_splitting} 
\end{figure}
\end{example}
\begin{example}\label{example_splitting}
Let $\Sigma=\Sigma_{g,b}$ and take paths $\alpha_i$ and $\delta_j,\gamma_j$ as in \S \ref{subsection_surface}. Splitting $\Sigma$ successively along the $b-2+g$ splitting arcs shown in the second picture of Figure \ref{figure_splitting}, we get a surface $\hat{\Sigma}$ which is the disjoint union of $b-1$ copies of $\Sigma_{0,2}$ and $g$ copies of $\Sigma_{1,1}$, and a bijection between the representation spaces
$$
M_G(\Sigma_{g,b})\overset\sim\longrightarrow M_G(\hat{\Sigma})=M_G(\Sigma_{0,2})^{b-1}\times M_G(\Sigma_{1,1})^g.
$$
\end{example}

\subsection{Quasi-Poisson manifolds}
\begin{definition}\label{definition_quasipoisson}
Let $G$ be a Lie group with an invariant scalar product $\bi{\cdot}{\cdot}$ on the Lie algebra $\g$. Let $M$ be a manifold with a $G$-action denoted by 
$\rho$. A $G$-invariant bivector field $P\in\Gamma(\bigwedge^2\T M)$ is called a \emph{quasi-Poisson $G$-tensor} and $(M, P)$ called a \emph{quasi-Poisson $G$-manifold} if $P$ satisfies
\begin{equation}\label{equation_defquasipoisson}
[P,P]=\rho_\phi.
\end{equation}
Here $[P,P]\in \Gamma(\bigwedge^3\T M)$ is the Schouten bracket, $\phi\in(\bigwedge^3\mathfrak{g})^\g$ is a canonical invariant trivector associated with $\bi{\cdot}{\cdot}$, which has the following expression in terms of an orthonormal basis $(e_i)$ of $\g$:
$$
\phi=\frac{1}{12}\sum_{i,j,k}\bi{e_i}{[e_j, e_k]}e_i\wedge e_j\wedge e_k,
$$
and we have extended the Lie algebra homomorphism 
$$\g\rightarrow\Gamma(\T M),\quad x\mapsto \rho_x,\mbox{ where } \rho_x(m):=\frac{d}{dt}\big|_{t=0}\rho_{\exp(-tx)}(m)$$
to a map $\bigwedge^\bullet\g\rightarrow\Gamma(\bigwedge^\bullet\T M)$ preserving Schouten brackets.

The \emph{quasi-Poisson bracket} $\brac{\cdot}{\cdot}$ associated to $P$ is defined as the anti-symmetric bilinear form on $\Func{M}$ given by 
$$
\brac{f}{g}=P(df,dg).
$$


\end{definition}

\begin{definition}\label{definition_moment}
Let $(M,P)$ be a quasi-Poisson $G$-manifold, where the $G$-action is denoted by $\rho$. An equivariant map $\mu:M\rightarrow G$ (where $G$ acts on itself by conjugation) is called a \emph{(group-valued) moment map}, and $(M, P, \mu)$ called a \emph{Hamiltonian quasi-Poisson manifold}, if $\mu$ satisfies
\begin{equation}\label{equation_diffofhol}
\mu^*\theta(P^\sharp(df))=-\frac{1}{2}(1+\Ad_\mu^{-1})\chi_f
\end{equation}
for any function $f\in\Func{M}$. Here $\theta\in\Omega^1(G, \g)$ is the left invariant Maurer-Cartan $1$-form, $P^\sharp: \T^*M\rightarrow \T M$ is defined by $P^\sharp(df)=P(df,\cdot)$ and the \emph{variation map} $\chi_{f}: M\rightarrow \g$ of $f$ is defined by 
\begin{equation}\label{equation_variation}
\bi{\chi_f(m)}{x}=\rho_x(f)(m),\quad\mbox{for any }m\in M, x\in\g.
\end{equation}
\end{definition}

\begin{remark}
A quasi-Poisson $G$-manifold $(M, P)$ is said to be \emph{non-degenerate} if at any point $m\in M$ we have $\T_mM=P_m^\sharp(\T^*M)+\rho_\g(m)$. Any Hamiltonian quasi-Poisson manifold is foliated by non-degenerate ones. 

In the present paper we mainly work with the canonical quasi-Poisson structure on $M_G(\Sigma)$, which is known to be non-degenerate. 
\end{remark}

The following fusion construction provides a way to get new quasi-Poisson manifolds from old ones:
\begin{definition}\label{definition_fusion}
Let $(M,P)$ be a quasi-Poisson $G\times G\times H$-manifold, where the $G\times G$-action is denoted by $\rho$. Let $(e_i)$ be an orthonormal basis of $\g$ and put
$$
\psi=\frac{1}{2}\sum_i (e_i,0)\wedge(0,e_i)\in\wedge^2(\g\oplus\g).
$$
Then the bivector field
\begin{equation}\label{equation_fusion}
P'=P-\rho_\psi
\end{equation}
is a quasi-Poisson $G\times H$-tensor, where $G\times H$ acts on $M$ via the diagonal embedding
$$
G\times H\hookrightarrow G\times G\times H,\quad (g,h)\mapsto (g,g,h).
$$
The quasi-Poisson $G\times H$-manifold $(M, P')$ is called the \emph{fusion} of $(M, P)$.

Furthermore, if $\mu=(\mu_1,\mu_2,\nu): M\rightarrow G\times G\times H$ is a moment map for $P$, then $\mu'=(\mu_1\cdot\mu_2,\nu): M\rightarrow G\times H$ is a moment map for $P'$. The Hamiltonian quasi-Poisson $G\times H$-manifold $(M, P', \mu')$ is also called the fusion of $(M, P, \mu)$.

In particular, let $(M_i, P_i, (\mu_i,\nu_i))$ be a Hamiltonian quasi-Poisson $G\times H_i$-manifold ($i=1,2$). Clearly, $(M_1\times M_2, P_1+P_2, (\mu_1,\mu_2, \nu_1, \nu_2))$ is a Hamiltonian quasi-Poisson $G\times G\times H_1\times H_2$-manifold. The fusion $(M_1\times M_2, P', (\mu_1\cdot\mu_2, \nu_1, \nu_2))$ is called the \emph{fusion product} of $M_1$ and $M_2$, denoted by $M_1\circledast M_2$.
\end{definition}


\begin{example}\label{example_G}
Let $G\times G$ acts on $G$ by $\rho_{(g, h)}(a)=gah^{-1}$. Then the trivial bivector field $P=0$ on $G$ is a quasi-Poisson $G\times G$-tensor. It does not admit moment maps.
\end{example}

\begin{example}
Applying fusion to Example \ref{example_G}, we get a quasi-Poisson $G$-tensor on $G$ with respect to the conjugation action:
$$
P_G=\frac{1}{2}\sum_ie_i^R\wedge e_i^L.
$$
Here $e_i^L$ (resp. $e_i^R$) is the left (resp. right) invariant vector field on $G$ generated by $e_i\in\g=\T_eG$.
It can be shown that the identity $G\rightarrow G$ is a moment map for $P_G$.
\end{example}

\begin{example}\label{example_double}
The product of two copies of Example \ref{example_G} is $M=G\times G$ with the $G^4$-action $\rho_{(g_1,g_2,g_3,g_4)}(a,b)=(g_1ag_2^{-1},g_3bg_4^{-1})$. Applying fusions with respect to the first and last factor of $G^4$, then with respect to the second and third factor, we get the following quasi-Poisson $G\times G$-tensor on $G\times G$ (where the action is $\rho_{(g,h)}(a,b)=(gah^{-1}, hbg^{-1})$):
$$
P=\frac{1}{2}\sum_i(e_i^{1,L}\wedge e_i^{2,R}+e_i^{1,R}\wedge e_i^{2,L}).
$$
Here $e_i^{1,L}$ (resp. $e_i^{2,L}$) is the left invariant vector field on the first (resp. second) factor generated by $e_i$.

It can be shown that $$\mu: G\times G\rightarrow G\times G, (a,b)\mapsto (ab, a^{-1}b^{-1})$$ is a moment map in this case. This Hamiltonian quasi-Poisson is called the \emph{double} of $G$, denoted by $D(G)$.

Applying fusion again to $D(G)$, we get a quasi-Poisson $G$-tensor on $G\times G$ (where $G$-acts by conjugation on both factors), which has moment map $(a,b)\mapsto aba^{-1}b^{-1}$. This Hamiltonian quasi-Poisson manifold is called the \emph{fused double}.
\end{example}


\subsection{The Quasi-Poisson structure on $M_G(\Sigma)$}\label{subsection_qh}
A main result in quasi-Poisson theory is that there is a canonical quasi-Poisson $G^b$-tensor on $M_G(\Sigma)$, whose reduction gives the standard Poisson structure on $X_G(\Sigma)$. 

First let us consider the simplest case $\Sigma=\Sigma_{0,2}$. As a $G^2$-manifold, $M_G(\Sigma_{0,2})$ is identified with the double $D(G)$ through
\begin{align*}
M_G(\Sigma_{0,2})&\overset\sim\longrightarrow D(G)\\
(u,v)&\longmapsto(a, b)=(u, vu^{-1}).
\end{align*}
Thus $M_G(\Sigma_{0,2})$ is a Hamiltonian quasi-Poisson $G^2$-manifold, with moment map
$$(ab, a^{-1}b^{-1})=(uvu^{-1}, v^{-1})=(\mu_1,\mu_2).$$

Next, under the splitting map $R_\Delta: M_G(\Sigma_{1,1})\overset\sim\rightarrow M_G(\Sigma_{0,2})\cong D(G)$ of Example \ref{example_splittingoneholedtorus}, 
the $G$-action on $M_G(\Sigma_{1,1})$ coincides with the action of diagonal subgroup of $G^2$ on $D(G)$, hence we can also endow $M_G(\Sigma_{1,1})$ with the Hamiltonian quasi-Poisson structure of the fused double. The moment map $\mu_1\cdot\mu_2$ coincides with $\mu: M_G(\Sigma_{1,1})\rightarrow G$.

Similarly, in view of Example \ref{example_splitting}, for any $\Sigma=\Sigma_{g,b}$ we can endow $M_G(\Sigma)$ with the Hamiltonian quasi-Poisson $G^b$-manifold structure of the fusion product
$$
M_G(\Sigma)=\underbrace{M_G(\Sigma_{0,2})\circledast \cdots\circledast M_G(\Sigma_{0,2})}_{b-1}\circledast \underbrace{M_G(\Sigma_{1,1})\circledast \cdots\circledast M_G(\Sigma_{1,1})}_{g},
$$
and the moment map is 
$$
(\mu_1,\cdots,\mu_b): M_G(\Sigma)\longrightarrow G^b.
$$

\begin{theorem}\label{theorem_AMM} 
The above Hamiltonian quasi-Poisson structure on $M_G(\Sigma)$ does not depend on the way we split $\Sigma$ into pieces, and the restriction of the quasi-Poisson bracket to $\Func{X_G(\Sigma)}=\Func{M_G(\Sigma)}^{G^b}\subset\mathcal{O}_{M_G(\Sigma)}$ is the standard Poisson structure on $X_G(\Sigma)$. 

Moreover, this quasi-Poisson structure has the following property: let $\Delta\subset\Sigma$ be a splitting arc issuing from the marked point $p_1\in\partial\Sigma$, then via the identification $R_\Delta: M_G(\Sigma)\overset\sim\rightarrow M_G(\Sigma_\Delta)$ (see \S \ref{subsection_splitting}), the Hamiltonian quasi-Poisson manifold $M_G(\Sigma)$ is the same as the fusion of $M_G(\Sigma_\Delta)$ with respect to the first two factors of $G^{b+1}$.
\end{theorem}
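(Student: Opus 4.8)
The plan is to reduce the whole statement to one algebraic property of fusion and then transport it to surfaces through the combinatorics of splitting arcs; all of this is implicit in the framework of \cite{AMM, AKM}, and I outline the surface-specific argument. The engine is the following associativity of the construction in Definition \ref{definition_fusion}. If a quasi-Poisson $G^n\times H$-manifold $(M,P)$ is turned into a $G\times H$-manifold by fusing all $n$ copies of $G$ into the diagonal, the resulting bivector is $P'=P-\sum_{1\le i<j\le n}\rho_{\psi_{ij}}$, where $\psi_{ij}=\frac12\sum_k e_k^{(i)}\wedge e_k^{(j)}$ and $e_k^{(i)}$ is the orthonormal basis element $e_k$ placed in the $i$-th slot of $\g^{\oplus n}$. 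I would verify this by fusing in stages: fusing slots $1,2$ and then the new diagonal slot with slot $3$ gives the correction $\rho_{\psi_{12}}+\rho_{\psi_{13}}+\rho_{\psi_{23}}$, since the diagonal generator is $e_k^{(1)}+e_k^{(2)}$; iterating yields the manifestly symmetric sum above, so the result is independent of the order of the pairwise fusions and disjoint fusions commute. On moment maps, associativity of multiplication in $G$ shows that every such sequence gives the same ordered product $\mu_1\cdots\mu_n$. Each stage is quasi-Poisson by Definition \ref{definition_fusion}, and the two building blocks $M_G(\Sigma_{0,2})\cong D(G)$ and the fused double $M_G(\Sigma_{1,1})$ are Hamiltonian quasi-Poisson by Example \ref{example_double}.

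For well-definedness I would argue by induction on the number of splitting arcs needed to reduce $\Sigma=\Sigma_{g,b}$ to the $b-1$ cylinders and $g$ one-holed tori of Example \ref{example_splitting}, the base case being the pieces themselves. It suffices to compare, for two splitting arcs, the structure obtained by cutting along them in the two orders: by the inductive hypothesis the structures on the smaller surfaces are already canonical, so one only needs the two fusions reconstructing $M_G(\Sigma)$ to agree. When the arcs are disjoint the corresponding fusions act on disjoint pairs of factors and commute. When they cross I would localize to a subsurface neighbourhood of their union and compare the two groupings directly; there the comparison is exactly a rebracketing of a fixed fusion of fixed pieces, settled by the associativity property above. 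Hence the Hamiltonian quasi-Poisson structure on $M_G(\Sigma)$ does not depend on the splitting.

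The reduction statement I would prove in two steps. That the restricted bracket is Poisson is formal: for $G^b$-invariant functions $f,g,h$ the Jacobiator equals $[P,P](df,dg,dh)=\rho_\phi(df,dg,dh)$, with $\phi\in\wedge^3(\g^{\oplus b})$ built from the fundamental vector fields of the $G^b$-action; since those vector fields annihilate invariant functions, the Jacobiator vanishes on $\Func{X_G(\Sigma)}=\Func{M_G(\Sigma)}^{G^b}$, so $\brac{\cdot}{\cdot}$ descends to a genuine Poisson bracket. That this Poisson structure is the Atiyah--Bott/Goldman one I would take from \cite{AMM, AKM}, where the reduction of precisely this fusion model is identified with the standard structure on $X_G(\Sigma)$.

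Finally, the ``Moreover'' clause follows at once. Let $\Delta$ be a splitting arc from $p_1$ and let $S$ be any maximal arc system of $\Sigma_\Delta$; then $S\cup\{\Delta\}$ is a maximal arc system of $\Sigma$ cutting it into the same elementary pieces. By the independence just established, the $S$-structure on $M_G(\Sigma_\Delta)$ and the $(S\cup\{\Delta\})$-structure on $M_G(\Sigma)$ are the canonical ones, so using associativity I may perform the fusions indexed by $S$ first, reconstructing $M_G(\Sigma_\Delta)$, and the fusion indexed by $\Delta$ last. This last fusion couples exactly the two factors created by $\Delta$, namely those at $p_1^L$ and $p_1^R$ — the first two factors of $G^{b+1}$ — identifies them diagonally, and forms the product $\mu_1^L\cdot\mu_1^R=\mu_1$ as recorded in \S\ref{subsection_splitting}; this is precisely the asserted fusion of $M_G(\Sigma_\Delta)$ along its first two factors. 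I expect the genuine obstacle to sit in the second paragraph: pinning down the local model for two crossing arcs and confirming that the two orders of cutting differ only by a rebracketing of fusions, so that they are reconciled by the associativity property. The bivector bookkeeping behind that property — the signs of the cross-terms $\psi_{ij}$ and the consistent transformation of the Cartan trivector $\phi$ — is where a computational slip would most easily hide.
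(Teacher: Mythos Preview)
The paper does not actually prove this theorem: after stating it, the text only remarks that the result ultimately comes from realising $M_G(\Sigma)$ as a partial reduction of the infinite-dimensional space of flat connections \cite{AMM, AKM}, and that the independence-of-splitting can be shown directly, referring to \cite{libland-severa}. So there is no in-paper argument to compare yours against; your sketch is an attempt at the direct route the paper merely points to.

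Your outline is reasonable, and the deduction of the ``Moreover'' clause from well-definedness plus associativity is clean. But you are underestimating one point. The associativity you establish covers only \emph{rebracketings} of a fixed linear order on the $G$-factors: the formula $P'=P-\sum_{i<j}\rho_{\psi_{ij}}$ presupposes that order, and swapping which of two factors is ``first'' in a single fusion flips the sign of the correction (since $\psi_{ji}=-\psi_{ij}$) and reverses the moment-map product. Different splittings of $\Sigma$ --- in particular those with crossing arcs, or even different choices of the standard generators in \S\ref{subsection_surface} --- can produce genuinely different \emph{orderings} of the elementary pieces around $p_1$, not merely different bracketings. So in your crossing-arcs case the two structures to be compared need not differ by a rebracketing alone; one also needs that $M_1\circledast M_2$ and $M_2\circledast M_1$ become equal after the appropriate surface identification, which is a separate statement about how fusion interacts with the cyclic order of directions at $p_1$. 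That is where the real content of the independence claim lies, and your phrase ``exactly a rebracketing of a fixed fusion of fixed pieces'' does not yet capture it.
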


Briefly speaking, the theorem comes from the fact that the standard Poisson structure on $X_G(\Sigma)$ arises as a reduction of the infinite-dimensional symplectic manifold $\mathcal{N}_G(\Sigma)$ of flat connections, while the quasi-Poisson structure on $M_G(\Sigma)$ is a partial reduction. However, the first statement of the theorem can be shown in a straightforward way, see \cite{libland-severa}.

\section{The quasi-Poisson bracket on $M_G(\Sigma)$}\label{section_quasi}
The aim of this section is to prove the main result of the paper, Theorem \ref{theorem_quasibracket}, and deduce some simple corollaries. First we need some more notations.

%
%

\subsection{Notations}\label{subsection_notations}
\begin{definition}\label{definition_variation}
Let $G$ be a Lie group with an invariant scalar product $\bi{\cdot}{\cdot}$ on its Lie algebra $\g$. For any $\Phi\in\Func{G}$, we define maps $\so{\Phi}, \ta{\Phi}:G\rightarrow\mathfrak{g}$ by
$$
x^R(\Phi)(g)=\bi{\so{\Phi}(g)}{x},\quad x^L(\Phi)(g)=\bi{\ta{\Phi}(g)}{x},
$$
for any $x\in\g$ and $g\in G$.
\end{definition}

We will often make use of the following characterizing property of $\ta{\Phi}$ and $\so{\Phi}$: let $\theta\in\Omega^1(G, \g)$ (resp. $\bar\theta\in\Omega^1(G, \g)$) denote the left (resp. right) invariant Maurer-Cartan $1$-form, then for any manifold $M$ and map $u:M\rightarrow G$, we have 
$$
d(\Phi(u))=\bi{\so{\Phi}(u)}{u^*\bar\theta}=\bi{\ta{\Phi}(u)}{u^*\theta}.
$$
Here and below, $\Phi(u)$ and $\Phi^I(u)$ means the composition of $u$ with $\Phi$ and $\Phi^I$ ($I=\wedge, \vee$), respectively. 

\begin{example}[Matrix entry functions]\label{example_caseGL}
Put $G=\GLR$ and equip the Lie algebra $\mathfrak{g}=\mathfrak{gl}_n\mathbb{R}$ with the $\Ad$-invariant scalar product $\bi{x}{y}=\Tr(xy)$. Let $\Phi_{ij}\in\Func{G}$ $(1\leq i,j\leq n)$ be the \emph{matrix entry function} given by $\Phi_{ij}(g)=g_{ij}$ (the $(i,j)$-entry of $g$). 

Let $\e_{ij}$ be the $n\times n$ matrix whose $(i,j)$-entry is $1$ and all other entries vanish. The definition of  $\ta{\Phi}_{ij}$ yields
$$
\bi{\ta{\Phi}_{ij}(g)}{x}=d\Phi_{ij}(x^L(g))=d\Phi_{ij}(gx)=(gx)_{ij}=\Tr(\e_{ji}gx)
=\bi{\e_{ji}g}{x},
$$
from which we get
$$
\ta{\Phi}_{ij}(g)=\e_{ji}g,
$$
and similarly
$$
\so{\Phi}_{ij}(g)=g\e_{ji}.
$$
\end{example}

Here are some elementary properties of $\ta{\Phi}$ and $\so{\Phi}$.
\begin{lemma}\label{lemma_taso}
\begin{enumerate}[(i)]
\item\label{item_tasoi}
$\ta{\Phi}$ and $\so{\Phi}$ are related by $$\Ad_g\ta{\Phi}(g)=\so{\Phi}(g),\quad\forall g\in G.$$
\item\label{item_tasoii}
Let $\hat\Phi$ be the function on $G$ given by $\hat\Phi(g)=\Phi(g^{-1})$, then for any $g\in G$ we have
$$
\ta{\hat\Phi}(g)=-\so{\Phi}(g^{-1}), \quad \so{\hat\Phi}(g)=-\ta{\Phi}(g^{-1}).
$$
\item\label{item_tasoiii}
Fix $a,b\in G$ and let $\widetilde\Phi$ be the function on $G$ given by $\widetilde\Phi(g)=\Phi(agb)$, then for any $g\in G$ we have
$$
\ta{\widetilde\Phi}(g)=\Ad_b\ta{\Phi}(agb),\quad \so{\widetilde\Phi}(g)=\Ad_{a^{-1}}\so{\Phi}(agb).
$$
\item\label{item_tasoiv}
Let $M$ be a $G$-manifold and $u: M\rightarrow G$ be a map. We denote respectively by $L$, $R$ and $\AD$ the left, right and conjugation $G$-action on itself, then
$$
\chi_{\Phi(u)}=\left\{
\begin{array}{ll}
-\so{\Phi}(u)&\mbox{if $u$ is $L$-equivariant},\\
\ta{\Phi}(u)&\mbox{if $u$ is $R$-equivariant},\\
-\so{\Phi}(u)+\ta{\Phi}(u)&\mbox{if $u$ is $\AD$-equivariant}.
\end{array}
\right.
$$
(Recall that for $f\in\Func{M}$ the variation map $\chi_f: M\rightarrow \g$ is defined by (\ref{equation_variation}).)
\end{enumerate}
\end{lemma}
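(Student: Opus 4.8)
The plan is to prove all four parts of Lemma~\ref{lemma_taso} by unwinding Definition~\ref{definition_variation} and using the elementary calculus of left- and right-invariant vector fields on $G$. The basic tool throughout is the relation $x^R = \Ad_g x^L$ at the point $g$ (more precisely, the right-invariant vector field generated by $x$ equals at $g$ the left-invariant field generated by $\Ad_g x$), together with the defining identities
\begin{equation*}
x^R(\Phi)(g)=\bi{\so{\Phi}(g)}{x},\qquad x^L(\Phi)(g)=\bi{\ta{\Phi}(g)}{x}.
\end{equation*}

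For part~(\ref{item_tasoi}), I would start from $x^R(\Phi)(g)=\bi{\so{\Phi}(g)}{x}$, rewrite $x^R$ at $g$ as $(\Ad_{g^{-1}}x)^L$ (so that $x^R(\Phi)(g)=\bi{\ta{\Phi}(g)}{\Ad_{g^{-1}}x}$), and then use $\Ad$-invariance of $\bi{\cdot}{\cdot}$ to move $\Ad_{g^{-1}}$ onto $\ta{\Phi}(g)$, landing on $\bi{\Ad_g\ta{\Phi}(g)}{x}$. Comparing the two expressions for all $x$ gives $\so{\Phi}(g)=\Ad_g\ta{\Phi}(g)$. Part~(\ref{item_tasoii}) is the derivative of the inversion map: I would differentiate $\hat\Phi(g)=\Phi(g^{-1})$ along a left-invariant field, using that the inversion sends the left-invariant field $x^L$ to $-x^R$ (up to the appropriate translate), to identify $\ta{\hat\Phi}(g)$ with $-\so{\Phi}(g^{-1})$; the second identity follows symmetrically, or by combining the first with part~(\ref{item_tasoi}). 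Part~(\ref{item_tasoiii}) is the chain rule for the two-sided translation $g\mapsto agb$: differentiating $\widetilde\Phi(g)=\Phi(agb)$ and tracking how left/right translations transport the invariant vector fields (a right translation by $b$ conjugates $x^L$ into $(\Ad_b x)^L$) produces the stated $\Ad_b$ and $\Ad_{a^{-1}}$ twists. In each case the quickest route is probably the characterizing property $d(\Phi(u))=\bi{\so{\Phi}(u)}{u^*\bar\theta}=\bi{\ta{\Phi}(u)}{u^*\theta}$ applied to $u=$ the relevant map, which repackages these as identities of pulled-back Maurer--Cartan forms.

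For part~(\ref{item_tasoiv}), which is the one genuinely connecting to the quasi-Poisson formalism, I would compute the variation map $\chi_{\Phi(u)}$ directly from its definition~(\ref{equation_variation}): $\bi{\chi_{\Phi(u)}(m)}{x}=\rho_x(\Phi(u))(m)$. The point is to compute $\rho_x(\Phi(u))$ according to how $u$ is equivariant. If $u$ is $L$-equivariant, then $\rho_x$ differentiates $u$ in the direction of the \emph{right}-invariant field (because the sign convention $\rho_x=\frac{d}{dt}|_{t=0}\rho_{\exp(-tx)}$ combined with left multiplication $u\mapsto \exp(-tx)\,u$ produces $-x^R$ acting through $u$), giving $\bi{-\so{\Phi}(u)}{x}$; similarly $R$-equivariance gives the $\ta{\Phi}(u)$ term, and $\AD$-equivariance, being the difference of a left and a right action, gives the sum $-\so{\Phi}(u)+\ta{\Phi}(u)$ by linearity.

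The main obstacle I anticipate is bookkeeping of signs and of the left/right distinction: the action convention carries a $-tx$ in the exponential, inversion flips left and right invariant fields, and the scalar product's $\Ad$-invariance must be invoked at exactly the right moment. None of the individual steps is deep, but getting the three cases of~(\ref{item_tasoiv}) to come out with consistent signs requires care, so I would fix once and for all the identity $(\exp(-tx))^* $ acting on functions equals differentiation along $-x^R$ (for left translation) or $-x^L$ (for right translation) and then apply it mechanically.
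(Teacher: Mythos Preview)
Your proposal is correct. The paper in fact omits the proof of this lemma entirely, introducing it only as ``some elementary properties of $\ta{\Phi}$ and $\so{\Phi}$'' and leaving the verification to the reader; your outline is exactly the standard direct computation one would supply. One minor slip to watch when you write it out: in part~(\ref{item_tasoiii}) the pushforward of $x^L$ under right translation by $b$ is $(\Ad_{b^{-1}}x)^L$, not $(\Ad_b x)^L$ (you then recover the $\Ad_b$ on $\ta{\Phi}(agb)$ after moving the $\Ad_{b^{-1}}$ across the invariant pairing), but you have already flagged precisely this kind of left/right bookkeeping as the thing to be careful about.
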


We also need some notations concerning paths on $\Sigma$ (see \S \ref{subsection_surface} for assumptions on $\Sigma$). 

Given $\Phi\in\Func{G}$ and a path $\alpha$,  we denote 
$$\Phi_\alpha:=\Phi(\Hol_\alpha)\in\Func{M_G(\Sigma)},\quad \Phi^I_\alpha:=\Phi^I(\Hol_\alpha): M_G(\Sigma)\rightarrow \mathfrak{g}\quad (I=\wedge, \vee).$$

The \emph{starting direction}  (resp. \emph{ending direction}) of $\alpha$, denoted by $\so{\alpha}$ (resp. $\ta{\alpha}$) is the tangent vector at the starting (resp. ending) point of $\alpha$ up to positive scaling. 

If $p\in\partial\Sigma$ is a marked point, we use the notation ``$\alpha^I\vdash p$" (where $I=\vee,  \wedge$) to indicated that ``$\alpha^I$ lies on $p$", i.e., $\alpha$ starts from $p$ if $I=\wedge$ and $\alpha$ ends at $p$ if $I=\vee$.
  
Two paths $\alpha$ and $\beta$ are said to be \emph{in general position} if their interior intersection points are transversal double points (called  \emph{crossings}), and $\so{\alpha}$, $\ta{\alpha}$, $\so{\beta}$, $\ta{\beta}$ do not coincide with each other.


Let $\alpha\#\beta:=(\alpha\cap\beta)\setminus\partial\Sigma$ denote the set of crossings. For any $q\in\alpha\#\beta$, we let
$\varepsilon_q(\alpha,\beta)=\pm 1$ be the oriented intersection number of $\alpha$ and $\beta$ at $q$, and let $\alpha*_q\beta$ denote the path which starts at $\so{\alpha}$, runs along $\alpha$ before $q$, then switches to $\beta$ at $q$, runs along $\beta$ and ends at $\ta{\beta}$.

For any two symbols $I,J=\wedge, \vee$, we also let $\varepsilon(\alpha^I,\beta^J)=0,\pm \frac{1}{2}$ denote the ``oriented intersection number" of $\alpha^I$ and $\beta^J$, namely, define $\varepsilon(\alpha^I,\beta^J)=0$ if $\alpha^I$ and $\beta^J$ do not lie on the same marked point; otherwise both $\alpha^I$ and $\beta^J$ are in $\T_p\Sigma$ for a marked point $p\in\partial\Sigma$, and we define $\varepsilon(\alpha^I,\beta^J)=\frac{1}{2}$ if the frame $(\alpha^I,\beta^J)$ is compatible with the orientation of $\Sigma$.

Finally, we define the algebraic intersection number of $\alpha$ and $\beta$ as 
$$
i(\alpha,\beta):=\sum_{I,J=\wedge, \vee}\varepsilon(\alpha^I,\beta^J)+\sum_{q\in\alpha\#\beta}\varepsilon_q(\alpha,\beta).
$$
This generalizes the usual notion of algebraic intersection number for closed curves. It can be shown that the new notion is invariant under (endpoints-fixing) homotopy.

Let us remark that Lemma \ref{lemma_taso} (\ref{item_tasoiv}) has the following formulation when $M=M_G(\Sigma)$ and $u=\Hol_\alpha$, which will be used a few times below. We denote the variation map of $f\in\Func{M_G(\Sigma)}$ with respect to the $G$-action associated to a marked point $p\in\{p_1,\cdots,p_b\}$ by $\chi^{p}_f:M_G(\Sigma)\rightarrow\mathfrak{g}$. 
\begin{lemma}\label{lemma_formulation}
Put $\varepsilon_{IJ}=1$ if $I=J$ and $\varepsilon_{IJ}=-1$ if $I\neq J$. Let $\Phi\in\Func{G}$, and $\alpha$ be an   path on $\Sigma$. Then for any marked point $p\in\partial \Sigma$ we have
$$
\chi_{\Phi_\alpha}^{p}=\sum_{I:\alpha^I\vdash p}\varepsilon_{I\vee}\Phi^I_\alpha.
$$
\end{lemma}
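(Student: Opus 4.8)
The plan is to deduce Lemma \ref{lemma_formulation} directly from Lemma \ref{lemma_taso}(\ref{item_tasoiv}) by specializing to $M=M_G(\Sigma)$, $u=\Hol_\alpha$, and the $G$-action associated with the marked point $p$. The entire content is a bookkeeping of the three multiplication-type cases against the combinatorics of which $\alpha^I$ lie on $p$, so I do not expect a genuine obstacle; the work is merely to pin down signs. First I would recall from \S\ref{subsection_surface} how the $G$-action associated with $p$ acts on the holonomy. Writing $p=p_i$ and letting only the $i$-th group factor vary, the formula $((g_1,\dots,g_b).m)(\alpha)=g_im(\alpha)g_j^{-1}$ shows that $\Hol_\alpha$ is left-equivariant if $\alpha$ starts at $p$ but does not end there, right-equivariant if $\alpha$ ends at $p$ but does not start there, conjugation-equivariant if $\alpha$ both starts and ends at $p$, and invariant otherwise. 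By the definition of the incidence relation $\vdash$ in \S\ref{subsection_notations}, these four alternatives are detected precisely by which of $\alpha^\wedge\vdash p$ and $\alpha^\vee\vdash p$ hold: the first case is $\alpha^\wedge\vdash p$, $\alpha^\vee\not\vdash p$; the second is $\alpha^\vee\vdash p$, $\alpha^\wedge\not\vdash p$; the third is $\alpha^\wedge\vdash p$ and $\alpha^\vee\vdash p$; the last is neither.

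Next I would feed each case into Lemma \ref{lemma_taso}(\ref{item_tasoiv}). Using $\so{\Phi}(\Hol_\alpha)=\Phi^\wedge_\alpha$ and $\ta{\Phi}(\Hol_\alpha)=\Phi^\vee_\alpha$, the lemma gives $\chi^p_{\Phi_\alpha}$ equal to $-\Phi^\wedge_\alpha$ in the left case, $\Phi^\vee_\alpha$ in the right case, $-\Phi^\wedge_\alpha+\Phi^\vee_\alpha$ in the conjugation case, and $0$ when $\alpha$ avoids $p$. Finally I would check that the single expression $\sum_{I:\alpha^I\vdash p}\varepsilon_{I\vee}\Phi^I_\alpha$ reproduces all four values simultaneously: since $\varepsilon_{\wedge\vee}=-1$ and $\varepsilon_{\vee\vee}=+1$, the index $I=\wedge$ (present exactly when $\alpha$ starts at $p$) contributes $-\Phi^\wedge_\alpha$ and the index $I=\vee$ (present exactly when $\alpha$ ends at $p$) contributes $+\Phi^\vee_\alpha$, and these add independently, recovering the conjugation case as the sum of the two one-sided contributions and the vacuous case as the empty sum $0$.

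The only point that deserves care is the conjugation case, where $\alpha$ starts and ends at the same marked point: there I must make sure both indices $I=\wedge,\vee$ are retained in the sum and that their signed contributions combine into the $\AD$-line of Lemma \ref{lemma_taso}(\ref{item_tasoiv}) instead of cancelling. This is exactly what the sign convention $\varepsilon_{I\vee}$ is designed to guarantee. No general-position hypothesis is required here, since the statement concerns a single path $\alpha$.
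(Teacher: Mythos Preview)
Your proposal is correct and follows exactly the paper's approach: the paper states Lemma~\ref{lemma_formulation} as the specialization of Lemma~\ref{lemma_taso}(\ref{item_tasoiv}) to $M=M_G(\Sigma)$ and $u=\Hol_\alpha$, without spelling out the case-by-case bookkeeping you have carefully supplied. Your sign check via $\varepsilon_{\wedge\vee}=-1$, $\varepsilon_{\vee\vee}=+1$ is precisely what is needed, and no further argument is required.
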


\subsection{The quasi-Poisson bracket formula}\label{subsection_quasibracket}
\begin{theorem}\label{theorem_quasibracket}
For any $\Phi,\Psi\in\Func{G}$ and paths $\alpha$, $\beta$ in general position. The quasi-Poisson bracket of the functions $\Phi_\alpha$ and $\Psi_\beta$ on $M_G(\Sigma)$ is 
\begin{equation}\label{equation_quasibracket}
\brac{\Phi_\alpha}{\Psi_\beta}_{M_G(\Sigma)}
=\sum_{I,J=\wedge, \vee}\varepsilon(\alpha^I,\beta^J)\bi{\Phi^I_\alpha}{\Psi^J_\beta}
+\sum_{q\in\alpha\#\beta}\varepsilon_q(\alpha,\beta)B^q_{\Phi,\alpha,\Psi,\beta},
\end{equation}
where $B^q_{\Phi,\alpha,\Psi,\beta}\in\Func{M_G(\Sigma)}$ is defined by
$$B^q_{\Phi,\alpha,\Psi,\beta}=\bi{\so{\Phi}_\alpha}{\Ad_{\Hol_{\alpha*_q\beta}}\ta{\Psi}_\beta}.$$
\end{theorem}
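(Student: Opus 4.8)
The plan is to reduce the statement to the two building blocks $\Sigma_{0,2}$ and $\Sigma_{1,1}$ out of which $M_G(\Sigma)$ is assembled by fusion (Theorem~\ref{theorem_AMM}), controlling the bracket at each fusion step. The basic tool will be the transformation law for the bracket under a single fusion: if $(M,P)$ is a quasi-Poisson $G\times G\times H$-manifold and $(M,P')$ its fusion along the first two $G$-factors, then substituting $P'=P-\rho_\psi$ and the expression for $\psi$ from Definition~\ref{definition_fusion} into $\brac{f}{g}'=P'(df,dg)$, and using $\sum_i\bi{A}{e_i}\bi{B}{e_i}=\bi{A}{B}$, gives
\[
\brac{f}{g}'=\brac{f}{g}-\tfrac12\big(\bi{\chi^1_f}{\chi^2_g}-\bi{\chi^2_f}{\chi^1_g}\big),
\]
where $\chi^1_f,\chi^2_f$ are the variation maps (\ref{equation_variation}) of $f$ for the two fused actions. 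This computation is routine.

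Combined with the last part of Theorem~\ref{theorem_AMM}, a splitting arc $\Delta$ at $p_1$ then yields
\[
\brac{\Phi_\alpha}{\Psi_\beta}_{M_G(\Sigma)}=\brac{\Phi_\alpha}{\Psi_\beta}_{M_G(\Sigma_\Delta)}-\tfrac12\big(\bi{\chi^{p_1^L}_{\Phi_\alpha}}{\chi^{p_1^R}_{\Psi_\beta}}-\bi{\chi^{p_1^R}_{\Phi_\alpha}}{\chi^{p_1^L}_{\Psi_\beta}}\big).
\]
Choosing $\Delta$ in general position and disjoint from the interiors of $\alpha$ and $\beta$, the crossings $\alpha\#\beta$ and every endpoint frame away from $p_1$ are untouched by the splitting, while Lemma~\ref{lemma_formulation} writes each $\chi^{p_1^L}_{\Phi_\alpha}$, $\chi^{p_1^R}_{\Psi_\beta}$ as a signed sum of the $\Phi^I_\alpha$, $\Psi^J_\beta$ over the endpoints landing on $p_1^L$, $p_1^R$. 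Hence the correction is a signed combination of the $\bi{\Phi^I_\alpha}{\Psi^J_\beta}$, indexed exactly by the endpoint pairs that the splitting separates onto the two sides $p_1^L, p_1^R$; since the left--right distinction of these two points records the orientation of the frame $(\alpha^I,\beta^J)$, a direct check of signs identifies this correction with the corresponding $\varepsilon(\alpha^I,\beta^J)\bi{\Phi^I_\alpha}{\Psi^J_\beta}$ terms of (\ref{equation_quasibracket}). Iterating the splitting of Example~\ref{example_splitting} in this way peels off all the endpoint contributions and reduces the problem to the building blocks.

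It then remains to verify (\ref{equation_quasibracket}) on a cylinder and a one-holed torus. For $\Sigma_{0,2}=D(G)$ with the bivector of Example~\ref{example_double}, a direct calculation from Definition~\ref{definition_variation} gives, for functions of the two $G$-coordinates $a,b$,
\[
\brac{\Phi(a)}{\Psi(b)}=\tfrac12\big(\bi{\ta{\Phi}(a)}{\so{\Psi}(b)}+\bi{\so{\Phi}(a)}{\ta{\Psi}(b)}\big).
\]
The holonomy of an arbitrary path is a word in the generators, so using the behaviour of $\so{(\cdot)}$ and $\ta{(\cdot)}$ under inversion and translation (Lemma~\ref{lemma_taso}(\ref{item_tasoii}),(\ref{item_tasoiii})) and the multiplicativity of holonomy under concatenation, I would extend this to general $\Phi_\alpha$, $\Psi_\beta$. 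Matching the outcome to the geometry --- each interior crossing $q$ contributing a term with sign $\varepsilon_q(\alpha,\beta)$ in which the portions of $\alpha$ and $\beta$ around $q$ assemble into the holonomy $\Hol_{\alpha*_q\beta}$ --- should produce the crossing terms $\varepsilon_q(\alpha,\beta)B^q_{\Phi,\alpha,\Psi,\beta}$. The one-holed torus is obtained from the cylinder by one further fusion and is handled by the transformation law.

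The fusion bookkeeping is routine; the crux is the base case, and within it the appearance of the adjoint factor $\Ad_{\Hol_{\alpha*_q\beta}}$ in $B^q$. This is where Lemma~\ref{lemma_taso}(\ref{item_tasoiii}) does the essential work: the part of $\alpha$ before $q$ contributes to $\so{\Phi}_\alpha$, while the part of $\beta$ after $q$ enters through an adjoint twist, so that the two variation vectors pair along the holonomy running from $\so{\alpha}$ across $q$ to $\ta{\beta}$, namely $\Hol_{\alpha*_q\beta}$. Keeping every orientation sign consistent through these reductions --- the $\varepsilon_q(\alpha,\beta)$, the $\varepsilon(\alpha^I,\beta^J)$, and the $\varepsilon_{I\vee}$ of Lemma~\ref{lemma_formulation} --- is the main technical burden.
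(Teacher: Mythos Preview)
The fusion transformation law you derive is correct and matches the paper's Proposition~\ref{proposition_reducing}(\ref{item_reducingsplit}). But the overall strategy has a genuine gap. For your splitting step to apply, the arc $\Delta$ must be disjoint from the interiors of $\alpha$ and $\beta$ so that they lift to paths on $\Sigma_\Delta$; the fixed arcs of Example~\ref{example_splitting} have no reason to satisfy this for generic $\alpha,\beta$, so iterating that particular decomposition does not literally reduce the problem. Even if one could lift, the resulting paths on each cylinder or one-holed torus are still arbitrary --- the base case is the full theorem on those surfaces, not the single computation $\brac{\Phi(a)}{\Psi(b)}$ in the coordinates $a,b$. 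Your sentence ``extend this to general $\Phi_\alpha,\Psi_\beta$ via words in the generators'' is precisely where the real work hides: it amounts to a composition-of-paths reduction together with homotopy invariance of the right-hand side of (\ref{equation_quasibracket}), whose crossing sum visibly depends on the chosen representatives. Neither ingredient is supplied.

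The paper runs the reduction in the opposite order. It first establishes homotopy invariance of the right-hand side via a Reidemeister-type move analysis (Proposition~\ref{proposition_reducing}(\ref{item_homotopy})) and a composition reduction (Proposition~\ref{proposition_reducing}(\ref{item_composition})); together these cut the problem down to \emph{simple} paths $\alpha,\beta$ with no interior crossings. Only then does it introduce splitting arcs, and these are chosen adapted to the specific pair $(\alpha,\beta)$ --- not the standard arcs of Example~\ref{example_splitting} --- so as to land in one of three elementary configurations (Proposition~\ref{proposition_simplequasibracket}) that are checked directly from the moment-map condition and the explicit bivector on $D(G)$. In short, your fusion law is one of the tools, but the engine you are missing is the pair ``homotopy invariance $+$ composition''; without it the crossing terms $B^q_{\Phi,\alpha,\Psi,\beta}$ never get produced in a controlled way.
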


%

A proof will be given in the next subsection. We shall first give some remarks and easy consequences of the theorem. 
\begin{remark}\label{remark_quasibracket}
\begin{enumerate}[(i)]
\item\label{item_remarkquasibracket1}
Let us show that the right-hand side of the above formula is anti-symmetric when $(\Phi, \alpha)$ and $(\Psi, \beta)$ are exchanged.

The first sum is anti-symmetric because $\varepsilon(\alpha^I, \beta^J)=-\varepsilon(\beta^J, \alpha^I)$. 
For the second sum, since $\varepsilon_q(\alpha, \beta)=-\varepsilon_q(\beta, \alpha)$,  it is sufficient to show $$B^q_{\Phi,\alpha,\Psi,\beta}=B^q_{\Psi,\beta,\Phi,\alpha}.$$ This is proved by the following computation, using the $\Ad$-invariance of $\bi{\cdot}{\cdot}$, Lemma \ref{lemma_taso} and the observation that the path $\beta(\alpha*_q\beta)^{-1}\alpha$ is homotopic to $\beta*_q\alpha$:
\begin{align*}
B^q_{\Phi,\alpha,\Psi,\beta}&=\bi{\so{\Phi}_\alpha}{\Ad_{\Hol_{\alpha*_q\beta}}\ta{\Psi}_\beta}=\bi{\Ad_{\Hol_\alpha}\ta{\Phi}_\alpha}{\Ad_{\Hol_{\alpha*_q\beta}}\Ad_{\Hol_\beta}^{-1}\so{\Psi}_\beta}\\
&=\bi{\Ad_{\Hol_{\beta*_q\alpha}}\ta{\Phi}_\alpha}{\so{\Psi}_\beta}=B^q_{\Psi,\beta,\Phi,\alpha}.
\end{align*}

\item\label{item_remarkquasibracket2}
By similar computations as above, one can obtain the following more general expressions of $B^q_{\Phi,\alpha,\Psi,\beta}$, which will be used in \S \ref{subsection_proofquasibracket} when we prove the theorem. For any fixed $I$ and $J$, we define a path $$\gamma=(\alpha^{\varepsilon_{I\wedge}})*_q(\beta^{\varepsilon_{J\vee}}),$$
where $\varepsilon_{I\wedge}, \varepsilon_{J\vee}=\pm 1$ are defined in Lemma \ref{lemma_formulation}. Then we have
$$
B^q_{\Phi,\alpha,\Psi,\beta}=\bi{\Phi^I_\alpha}{\Ad_{\Hol_\gamma}\Psi^J_\beta}.
$$

\item\label{item_remarkquasibracket3}
Given $\Phi,\Psi\in\Func{G}$, $\Phi_\alpha$ and $\Psi_\beta$ only depend on the homotopy classes of $\alpha$ and $\beta$, so the right-hand side of (\ref{equation_quasibracket}) should be invariant under homotopy. This fact will be established in the next subsection as a step in the proof of the theorem.
\end{enumerate}
\end{remark}

A simple consequence of Theorem \ref{theorem_quasibracket} is the following
\begin{corollary}\label{corollary_alphaalpha}
If $\alpha$ is a simple path (i.e. has no self-intersection), then
$$\brac{\Phi_\alpha}{\Psi_\alpha}=0\quad\forall\Phi, \Psi\in\Func{G}.$$
\end{corollary}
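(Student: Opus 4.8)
The plan is to reduce the self-bracket $\brac{\Phi_\alpha}{\Psi_\alpha}$ to an application of Theorem \ref{theorem_quasibracket} by replacing the second copy of $\alpha$ with a small push-off. Since $\alpha$ is simple it is injective on its parameter interval; in particular its starting and ending points do not coincide, and as these points must be marked points they are two \emph{distinct} marked points, say $p_i$ (start) and $p_j$ (end) with $i\neq j$. First I would choose a path $\alpha'$ obtained by pushing $\alpha$ slightly to one side inside a tubular neighbourhood while keeping both endpoints pinned at $p_i$ and $p_j$. Because $\alpha$ is embedded, $\alpha'$ can be taken homotopic to $\alpha$ rel endpoints, in general position with $\alpha$, and disjoint from $\alpha$ in the interior, so that the crossing set $\alpha\#\alpha'$ is empty. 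Homotopy rel endpoints gives $\Hol_{\alpha'}=\Hol_\alpha$, hence $\Psi_{\alpha'}=\Psi_\alpha$ and $\Psi^I_{\alpha'}=\Psi^I_\alpha$, so that $\brac{\Phi_\alpha}{\Psi_\alpha}=\brac{\Phi_\alpha}{\Psi_{\alpha'}}$, which is now computed by formula (\ref{equation_quasibracket}).

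In that formula the second (crossing) sum is empty because $\alpha\#\alpha'=\emptyset$. For the first sum I would use that the starting directions $\so{\alpha},\so{\alpha'}$ both lie on $p_i$ while the ending directions $\ta{\alpha},\ta{\alpha'}$ both lie on $p_j$, with $i\neq j$; this is exactly where the hypothesis that $\alpha$ is simple (and hence not a loop) enters, since it forces the two mixed terms $\varepsilon(\so{\alpha},\ta{\alpha'})$ and $\varepsilon(\ta{\alpha},\so{\alpha'})$ to vanish. The endpoint sum therefore collapses to
$$
\varepsilon(\so{\alpha},\so{\alpha'})\,\bi{\so{\Phi}_\alpha}{\so{\Psi}_\alpha}+\varepsilon(\ta{\alpha},\ta{\alpha'})\,\bi{\ta{\Phi}_\alpha}{\ta{\Psi}_\alpha}.
$$
A direct inspection of the push-off shows that the two frames $(\so{\alpha},\so{\alpha'})$ and $(\ta{\alpha},\ta{\alpha'})$ carry opposite orientations: pushing to one side rotates the outgoing direction at $p_i$ counterclockwise but rotates the incoming direction at $p_j$ clockwise (the push-off leaves to one side and returns from the same side). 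Hence $\varepsilon(\so{\alpha},\so{\alpha'})=-\varepsilon(\ta{\alpha},\ta{\alpha'})=\pm\tfrac12$.

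To finish, I would invoke Lemma \ref{lemma_taso}(\ref{item_tasoi}), which gives $\so{\Phi}_\alpha=\Ad_{\Hol_\alpha}\ta{\Phi}_\alpha$ and $\so{\Psi}_\alpha=\Ad_{\Hol_\alpha}\ta{\Psi}_\alpha$; the $\Ad$-invariance of $\bi{\cdot}{\cdot}$ then yields $\bi{\so{\Phi}_\alpha}{\so{\Psi}_\alpha}=\bi{\ta{\Phi}_\alpha}{\ta{\Psi}_\alpha}$. Combined with the opposite signs of the two intersection numbers, the two surviving terms cancel and the bracket vanishes. The step requiring the most care is the orientation bookkeeping for $\varepsilon(\so{\alpha},\so{\alpha'})$ and $\varepsilon(\ta{\alpha},\ta{\alpha'})$, together with verifying that a push-off with empty interior crossing set and fixed endpoints genuinely exists; once these are in hand the cancellation is immediate from Lemma \ref{lemma_taso}(\ref{item_tasoi}). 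It is worth noting that the argument breaks down precisely for loops, where $\so\alpha$ and $\ta\alpha$ share a marked point and the mixed terms no longer vanish, so the simplicity (injectivity) of $\alpha$ is essential.
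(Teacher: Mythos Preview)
Your argument for the non-closed case is essentially the paper's own proof: take a push-off $\alpha'$ with no interior crossings, observe that the mixed endpoint terms vanish, and cancel the two surviving terms via $\so{\Phi}_\alpha=\Ad_{\Hol_\alpha}\ta{\Phi}_\alpha$ and the $\Ad$-invariance of $\bi{\cdot}{\cdot}$. That part is fine.

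The gap is your reading of ``simple''. In this paper a simple path is one with no self-intersection in the usual sense of embedded curves, and this \emph{does} allow simple loops: the paper speaks elsewhere of ``a simple loop issuing from a marked point'', and its own proof of this corollary explicitly splits into the cases ``$\alpha$ closed'' and ``$\alpha$ non-closed'', with separate local pictures for each. Your assertion that simplicity forces the two endpoints to be distinct is therefore a misinterpretation, and your closing remark that ``the argument breaks down precisely for loops'' is not a feature but a missing case. For a simple loop based at $p_i$ all four directions $\so{\alpha},\ta{\alpha},\so{\alpha'},\ta{\alpha'}$ sit at $p_i$, so the mixed terms $\varepsilon(\so{\alpha},\ta{\alpha'})$ and $\varepsilon(\ta{\alpha},\so{\alpha'})$ are generally nonzero; one must choose the push-off so that the resulting four endpoint contributions still cancel. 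The paper does this by arranging a specific cyclic configuration of the four directions at $p_i$ (shown in its figure) and then checking the cancellation directly, again using Lemma~\ref{lemma_taso}(\ref{item_tasoi}). You should supply that case rather than exclude it.
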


\begin{proof}
Let $\beta$ be a path homotopic to $\alpha$ such that $\alpha\#\beta=\emptyset$ and that the relative configuration of $\alpha$ and $\beta$ at endpoints, in the case where $\alpha$ is closed or non-closed, are as shown in the following local pictures respectively.

\begin{figure}[h] 
\centering \includegraphics[width=3in]{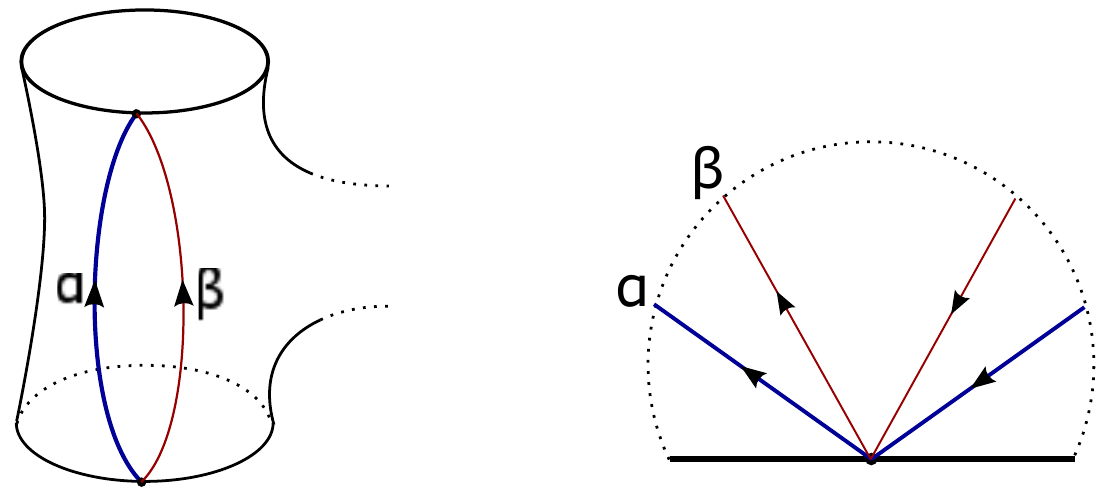} 
\end{figure}
 
Put $a=\Hol_\alpha=\Hol_\beta$. In the non-closed case we have $\varepsilon(\ta{\alpha},\ta{\beta})=\frac{1}{2}$, $\varepsilon(\so{\alpha},\so{\beta})=-\frac{1}{2}$ and $\varepsilon(\so{\alpha},\ta{\beta})=\varepsilon(\ta{\alpha},\so{\beta})=0$. So Theorem \ref{theorem_quasibracket} gives
$$
\brac{\Phi_\alpha}{\Psi_\alpha}=\brac{\Phi_\alpha}{\Psi_\beta}=\frac{1}{2}\bi{\ta{\Phi}(a)}{\ta{\Psi}(a)}-\frac{1}{2}\bi{\so{\Phi}(a)}{\so{\Psi}(a)},
$$
which vanishes because $\so{\Phi}(a)=\Ad_a\ta{\Phi}(a)$, $\so{\Psi}(a)=\Ad_a\ta{\Psi}(a)$ and the scalar product $\bi{\cdot}{\cdot}$ is $\Ad$-invariant. In the closed case the argument is similar.
\end{proof}

We also get the following particular case of Theorem \ref{theorem_quasibracket} by straightforward computations using Example \ref{example_caseGL}. Here $i(\alpha,\beta)$ is the algebraic intersection number defined in the previous subsection.
\begin{corollary}\label{corollary_quasibracketGL}
Assume $G=\GLR$ and the invariant scalar product is $\bi{x}{y}=\Tr(xy)$. For a path $\alpha$ on $\Sigma$, we set $\alpha_{ij}:=\Phi_{ij}\circ\Hol_\alpha$, where $\Phi_{ij}(g)=g_{ij}$ is the $(i,j)$-matrix entry function. Let $\alpha$ and $\beta$ be paths in general position. Then  
\begin{align}
&\brac{\alpha_{ij}}{\beta_{kl}}_{M_G(\Sigma)}=\sum\limits_{q\in\alpha\#\beta}\varepsilon_q(\alpha,\beta)(\alpha*_q\beta)_{il}(\beta*_q\alpha)_{kj}\label{equation_quasibracketGL}\\
+\varepsilon(&\alpha^\wedge,\beta^\wedge)\alpha_{kj}\beta_{il}+\varepsilon(\alpha^\vee,\beta^\vee)\alpha_{il}\beta_{kj}+\delta_{il}\varepsilon(\alpha^\wedge,\beta^\vee)(\beta\alpha)_{kj}+\delta_{jk}\varepsilon(\alpha^\vee,\beta^\wedge)(\alpha\beta)_{il}.\nonumber
\end{align}
Here $\delta_{ij}=0,1$ is the Kronecker delta.
\end{corollary}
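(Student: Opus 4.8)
The plan is to substitute directly into the formula (\ref{equation_quasibracket}) of Theorem \ref{theorem_quasibracket}, using the explicit expressions for $\so{\Phi}_{ij}$ and $\ta{\Phi}_{ij}$ computed in Example \ref{example_caseGL}. Writing $a=\Hol_\alpha$ and $b=\Hol_\beta$, the four functions entering the formula are $\so{\Phi}_\alpha=a\e_{ji}$ and $\ta{\Phi}_\alpha=\e_{ji}a$ (for $\Phi=\Phi_{ij}$), together with $\so{\Psi}_\beta=b\e_{lk}$ and $\ta{\Psi}_\beta=\e_{lk}b$ (for $\Psi=\Phi_{kl}$). Since the scalar product is $\bi{x}{y}=\Tr(xy)$, every summand on the right of (\ref{equation_quasibracket}) becomes the trace of a product of group elements and matrix units, so the whole computation rests on the two elementary identities $\Tr(\e_{pq}M)=M_{qp}$ and $\e_{pq}\e_{rs}=\delta_{qr}\e_{ps}$. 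From these I would first record the helper identity $\Tr(\e_{ji}P\e_{lk}Q)=P_{il}Q_{kj}$, which drives everything that follows.

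For the endpoint sum I would treat the four pairs $(I,J)$ separately. When $(I,J)=(\wedge,\wedge)$ or $(\vee,\vee)$ no matrix units collapse, and cyclic invariance of the trace together with the helper identity give $\Tr(\e_{ji}b\e_{lk}a)=\beta_{il}\alpha_{kj}$ and $\Tr(\e_{ji}a\e_{lk}b)=\alpha_{il}\beta_{kj}$, matching the terms weighted by $\varepsilon(\alpha^\wedge,\beta^\wedge)$ and $\varepsilon(\alpha^\vee,\beta^\vee)$. When $(I,J)=(\wedge,\vee)$ or $(\vee,\wedge)$ the products $\e_{ji}\e_{lk}=\delta_{il}\e_{jk}$ and $\e_{lk}\e_{ji}=\delta_{kj}\e_{li}$ collapse, producing the Kronecker factors $\delta_{il}$, $\delta_{jk}$ and the composite holonomies, so that these contributions become $\delta_{il}(ba)_{kj}=\delta_{il}(\beta\alpha)_{kj}$ and $\delta_{jk}(ab)_{il}=\delta_{jk}(\alpha\beta)_{il}$.

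For the crossing term I would expand $B^q_{\Phi,\alpha,\Psi,\beta}=\bi{\so{\Phi}_\alpha}{\Ad_{\Hol_{\alpha*_q\beta}}\ta{\Psi}_\beta}$ as $\Tr(a\e_{ji}\,c\e_{lk}bc^{-1})$ with $c=\Hol_{\alpha*_q\beta}$, and apply the helper identity with $P=c$ and $Q=bc^{-1}a$ to obtain $c_{il}(bc^{-1}a)_{kj}$. Here $c_{il}=(\alpha*_q\beta)_{il}$, while $bc^{-1}a$ is the holonomy of $\beta(\alpha*_q\beta)^{-1}\alpha$; invoking exactly the homotopy observation used in Remark \ref{remark_quasibracket} (\ref{item_remarkquasibracket1}), namely $\beta(\alpha*_q\beta)^{-1}\alpha\simeq\beta*_q\alpha$, rewrites this as $(\beta*_q\alpha)_{kj}$. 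Thus the crossing term equals $(\alpha*_q\beta)_{il}(\beta*_q\alpha)_{kj}$, completing the match with (\ref{equation_quasibracketGL}).

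The argument is essentially bookkeeping rather than a genuine obstacle, but a few points demand care. The transposition of indices in $\e_{ji}$, forced by the convention $\Tr(\e_{pq}M)=M_{qp}$, must be tracked so the final entries read $\alpha_{kj}$, $\beta_{il}$, and so on, rather than their transposes; and the ordering convention for holonomy under concatenation has to be fixed consistently so that the two mixed endpoint terms come out as $(\beta\alpha)$ and $(\alpha\beta)$. Sign or index slips are most likely in distinguishing the $(\wedge,\vee)$ and $(\vee,\wedge)$ terms (left versus right placement of the colliding matrix units) and in correctly applying the homotopy $\beta(\alpha*_q\beta)^{-1}\alpha\simeq\beta*_q\alpha$ in the crossing term.
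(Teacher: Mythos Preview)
Your proposal is correct and follows exactly the approach indicated in the paper, which merely states that the corollary is obtained ``by straightforward computations using Example \ref{example_caseGL}.'' The trace manipulations you outline---including the helper identity $\Tr(\e_{ji}P\e_{lk}Q)=P_{il}Q_{kj}$ and the use of the homotopy $\beta(\alpha*_q\beta)^{-1}\alpha\simeq\beta*_q\alpha$ from Remark \ref{remark_quasibracket}(\ref{item_remarkquasibracket1}) for the crossing term---are precisely what is needed, and your cautionary remarks about index transposition and concatenation conventions are well placed.
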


\begin{remark}
Corollary \ref{corollary_quasibracketGL} implies that all the $\alpha_{ij}$'s generates a quasi-Poisson subalgebra $\mathcal{A}_n(\Sigma)\subset\Func{M_{\GLR}(\Sigma)}$.  $\mathcal{A}_n(\Sigma)$ can be described intrinsically as the commutative algebra over $\mathbb{R}$ generated by all symbols of the form $\alpha_{ij}$ (where $1\leq i, j\leq n$ and $\alpha$ is a non-trivial element in $\pi_1(\Sigma)$) with relations $\sum_{j=1}^n\alpha_{ij}\beta_{jk}=(\alpha\beta)_{ik}$. This algebra first appears in \cite{massuyeau-turaev} (where $b=1$).
\end{remark}

More consequences of the theorem can be found in the author's thesis \cite{nie_these}. For instance, if $\Phi, \Psi$ are invariant by conjugation and $\alpha, \beta$ are closed, then Theorem \ref{theorem_quasibracket} recovers Goldman's formula \cite{goldman_invariant}, hence gives a new proof of the latter. The flow generated by the vector field $P^\sharp(d\Phi_\alpha)$ (where $P$ is the quasi-Poisson tensor on $M_G(\Sigma)$) can also be explicitly determined when $\alpha$ is simple.

\subsection{Proof of Theorem \ref{theorem_quasibracket}}\label{subsection_proofquasibracket}
We shall use the following terminology in the course of proof. Let $\alpha$ and $\beta$ be paths in general position on $\Sigma$. We say that \emph{the theorem is true for the triple $(\Sigma,\alpha, \beta)$} if formula (\ref{equation_quasibracket}) holds for any choice of $\Phi, \Psi\in\Func{G}$. 

The strategy of proof is to successively reduce to simpler cases. Let us begin with some easy reductions.
\begin{lemma}\label{lemma_firstreduction}
If the theorem is true for a triple $(\Sigma, \alpha, \beta)$, then it is true for the following triples as well:
\begin{enumerate}[(i)]
\item\label{item_firstreductioni}
$(\Sigma, \beta, \alpha)$;
\item\label{item_firstreductionii}
$(\overline{\Sigma}, \alpha, \beta)$, where $\overline{\Sigma}$ denotes $\Sigma$ with the opposite orientation\footnote{$M_G(\Sigma)$ and $M_G(\overline\Sigma)$ are the same as $G^b$-manifolds, but with opposite quasi-Poisson tensors and moment maps.};
\item\label{item_firstreductioniii}
$(\Sigma, \alpha, \beta^{-1})$, $(\Sigma,\alpha^{-1}, \beta)$ and $(\Sigma, \alpha^{-1}, \beta^{-1})$.
\end{enumerate}
\end{lemma}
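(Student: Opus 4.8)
The plan is to handle the three items in increasing order of difficulty, and to reduce (iii) to a single ``reverse the second path'' operation which is then combined with (i). Throughout I write $\mathrm{RHS}(\Phi,\alpha,\Psi,\beta)$ for the right-hand side of (\ref{equation_quasibracket}).

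Items (i) and (ii) come almost for free. For (i), the quasi-Poisson bracket is anti-symmetric, $\brac{\Psi_\beta}{\Phi_\alpha}=-\brac{\Phi_\alpha}{\Psi_\beta}$, while Remark \ref{remark_quasibracket}(\ref{item_remarkquasibracket1}) already shows that $\mathrm{RHS}(\Phi,\alpha,\Psi,\beta)$ is anti-symmetric under exchanging $(\Phi,\alpha)$ and $(\Psi,\beta)$; hence if the theorem holds for $(\Sigma,\alpha,\beta)$ then $\brac{\Psi_\beta}{\Phi_\alpha}=-\mathrm{RHS}(\Phi,\alpha,\Psi,\beta)=\mathrm{RHS}(\Psi,\beta,\Phi,\alpha)$, which is the theorem for $(\Sigma,\beta,\alpha)$. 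For (ii), I would compare how both sides respond to reversing the orientation of $\Sigma$. On the left, $M_G(\overline\Sigma)$ carries the opposite quasi-Poisson tensor (see the footnote), so the bracket changes sign. On the right, the holonomies, the functions $\Phi^I_\alpha,\Psi^J_\beta$ and the terms $B^q_{\Phi,\alpha,\Psi,\beta}$ are all insensitive to the orientation of $\Sigma$, whereas every oriented intersection number $\varepsilon(\alpha^I,\beta^J)$ and $\varepsilon_q(\alpha,\beta)$ changes sign, being defined through compatibility of frames with the orientation. Thus $\mathrm{RHS}$ also changes sign, and the two sign changes cancel.

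The real work is item (iii), where I would isolate the single statement: if the theorem holds for $(\Sigma,\alpha,\beta)$, then it holds for $(\Sigma,\alpha,\beta^{-1})$. Granting this, the remaining triples follow by composing with (i): $(\Sigma,\alpha^{-1},\beta)$ is reached via $(\Sigma,\alpha,\beta)\to(\Sigma,\beta,\alpha)\to(\Sigma,\beta,\alpha^{-1})\to(\Sigma,\alpha^{-1},\beta)$, and $(\Sigma,\alpha^{-1},\beta^{-1})$ by one further reversal. The starting observation is that $\Hol_{\beta^{-1}}=\Hol_\beta^{-1}$, so $\Psi_{\beta^{-1}}=\hat\Psi_\beta$ with $\hat\Psi(g)=\Psi(g^{-1})$; hence $\brac{\Phi_\alpha}{\Psi_{\beta^{-1}}}=\brac{\Phi_\alpha}{\hat\Psi_\beta}$, to which the hypothesis for $(\Sigma,\alpha,\beta)$ applies with $\hat\Psi$ in place of $\Psi$. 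It then remains to verify the purely formal identity $\mathrm{RHS}(\Phi,\alpha,\hat\Psi,\beta)=\mathrm{RHS}(\Phi,\alpha,\Psi,\beta^{-1})$, which is where all the bookkeeping lives. For the endpoint sum I would feed in Lemma \ref{lemma_taso}(\ref{item_tasoii}) in the form $\so{\hat\Psi}_\beta=-\ta{\Psi}_{\beta^{-1}}$ and $\ta{\hat\Psi}_\beta=-\so{\Psi}_{\beta^{-1}}$, together with the geometric facts that reversal negates the endpoint directions, $\so{(\beta^{-1})}=-\ta{\beta}$ and $\ta{(\beta^{-1})}=-\so{\beta}$, so that $\varepsilon(\alpha^I,(\beta^{-1})^\wedge)=-\varepsilon(\alpha^I,\beta^\vee)$ and $\varepsilon(\alpha^I,(\beta^{-1})^\vee)=-\varepsilon(\alpha^I,\beta^\wedge)$; the sign flips from Lemma \ref{lemma_taso}(\ref{item_tasoii}) are then absorbed exactly by these sign flips, and the first sum matches term by term.

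The crossing sum is the main obstacle, since it is the only place where the adjoint twist $\Ad_{\Hol_{\alpha*_q\beta}}$ must be reconciled with the reversed path. Here the crossing set $\alpha\#\beta$ is unchanged while each $\varepsilon_q$ flips, and I would rewrite $B^q_{\Phi,\alpha,\hat\Psi,\beta}=-\bi{\so{\Phi}_\alpha}{\Ad_{\Hol_{\alpha*_q\beta}}\so{\Psi}_{\beta^{-1}}}$ using Lemma \ref{lemma_taso}(\ref{item_tasoii}), then apply Lemma \ref{lemma_taso}(\ref{item_tasoi}) as $\so{\Psi}_{\beta^{-1}}=\Ad_{\Hol_{\beta^{-1}}}\ta{\Psi}_{\beta^{-1}}$ together with the path identity $\Hol_{\alpha*_q\beta}\,\Hol_{\beta^{-1}}=\Hol_{\alpha*_q\beta^{-1}}$ (obtained by splitting $\beta$ at $q$ and cancelling the common subpath) to conclude $B^q_{\Phi,\alpha,\hat\Psi,\beta}=-B^q_{\Phi,\alpha,\Psi,\beta^{-1}}$; the two minus signs combine with the flip of $\varepsilon_q$ to give the crossing sum of $\mathrm{RHS}(\Phi,\alpha,\Psi,\beta^{-1})$. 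One small technicality to dispatch beforehand is that $\alpha$ and $\beta^{-1}$ may fail to be in general position at the endpoints, which is harmless since both sides are invariant under endpoints-fixing homotopy (Remark \ref{remark_quasibracket}(\ref{item_remarkquasibracket3})), so one may perturb $\beta^{-1}$ into general position without changing either side.
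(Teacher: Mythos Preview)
Your proposal is correct and follows essentially the same route as the paper: parts (i) and (ii) via the anti-symmetry remark and the sign change under orientation reversal, and part (iii) by reducing to $(\Sigma,\alpha,\beta^{-1})$ via (i), writing $\Psi_{\beta^{-1}}=\hat\Psi_\beta$, and then matching the endpoint and crossing terms using Lemma \ref{lemma_taso}(\ref{item_tasoi}),(\ref{item_tasoii}) together with $\Hol_{\alpha*_q\beta}\Hol_{\beta^{-1}}=\Hol_{\alpha*_q\beta^{-1}}$. Your extra caveat about general position is unnecessary (reversing $\beta$ only negates its endpoint directions, so $\alpha$ and $\beta^{-1}$ are automatically in general position), but it does no harm.
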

\begin{proof}
For (\ref{item_firstreductioni}) this is essentially Remark \ref{remark_quasibracket} (\ref{item_remarkquasibracket1}). 

As for (\ref{item_firstreductionii}), this is because on one hand we have $\brac{\Phi_\alpha}{\Psi_\beta}_{M_G(\overline\Sigma)}=-\brac{\Phi_\alpha}{\Psi_\beta}_{M_G(\Sigma)}$ because the quasi-Poisson tensors on $\Sigma$ and $\overline\Sigma$ are opposite; and on the other hand, under the reversed orientation, $\varepsilon(\alpha^I,\beta^J)$ and $\varepsilon_q(\alpha,\beta)$ are $-1$ times of the old ones. 

For the triples in (\ref{item_firstreductioniii}), in view of (\ref{item_firstreductioni}), it is sufficient to prove for $(\Sigma, \alpha, \beta^{-1})$. Put $\hat{\Psi}(g):=\Psi(g^{-1})$. By hypothesis we have
\begin{align*}
\brac{\Phi_\alpha}{\Psi_{\beta^{-1}}}_{M_G(\Sigma)}&=\brac{\Phi_\alpha}{\hat\Psi_\beta}_{M_G(\Sigma)}\\
&=\sum_{I,J}\varepsilon(\alpha^I,\beta^J)\bi{\Phi^I_\alpha}{\hat\Psi^J_\beta}+\sum_{q\in\alpha\#\beta}\varepsilon_q(\alpha,\beta)B^q_{\Phi,\alpha,\hat\Psi,\beta}.
\end{align*}

Using Lemma \ref{lemma_taso} (\ref{item_tasoii}), we get
\begin{align*}
\Ad_{\Hol_{\alpha*_q\beta}}\ta{\hat\Psi}_\beta=-\Ad_{\Hol_{\alpha*_q\beta}\Hol_{\beta^{-1}}}\ta{\Psi}(\Hol_{\beta^{-1}})=-\Ad_{\Hol_{\alpha*_q\beta^{-1}}}\ta{\Psi}_{\beta^{-1}},
\end{align*} 
which implies $B^q_{\Phi,\alpha,\hat\Psi,\beta}=-B^q_{\Phi,\alpha,\Psi,\beta^{-1}}$, hence
$$\varepsilon_q(\alpha,\beta)B^q_{\Phi,\alpha,\hat\Psi,\beta}=\varepsilon_q(\alpha,\beta^{-1})B^q_{\Phi,\alpha,\Psi,\beta^{-1}}.$$

Similarly, for any $I,J=\wedge, \vee$ we have
$$
\varepsilon(\alpha^I,\beta^J)\bi{\Phi^I_\alpha}{\hat\Psi^J_\beta}=\varepsilon(\alpha^I,(\beta^{-1})^{J'})\bi{\Phi^I_\alpha}{\Psi^{J'}_{\beta^{-1}}},
$$
where $J'$ denotes the symbol opposite to $J$. Therefore, we have the required equality
\begin{align*}
\brac{\Phi_\alpha}{\Psi_{\beta^{-1}}}_{M_G(\Sigma)}=\sum_{I,J}\varepsilon(\alpha^I,(\beta^{-1})^J)\bi{\Phi^I_\alpha}{\Psi^J_{\beta^{-1}}}
+\sum_{q\in\alpha\#\beta^{-1}}\varepsilon_q(\alpha,\beta^{-1})B^q_{\Phi,\alpha,\Psi,\beta^{-1}}.
\end{align*}

\end{proof}

The main ingredients of our proof of Theorem \ref{theorem_quasibracket} are the following reductions.
\begin{proposition}\label{proposition_reducing}
\begin{enumerate}[(i)]
\item\label{item_homotopy}
Let $\alpha'$ and $\beta'$ be two   paths in general position which are endpoints-fixing-homotopic to $\alpha$ and $\beta$ respectively. Then the right-hand side of (\ref{equation_quasibracket}) gives the same function when $\alpha$ and $\beta$ are replace by $\alpha'$ and $\beta'$, respectively. In particular, if the theorem is true for the triple $(\Sigma, \alpha, \beta)$, then it is also true for
$(\Sigma,\alpha', \beta')$.
\item\label{item_composition}
Let $\alpha_1,\cdots,\alpha_r$ and $\alpha$ be   paths such that $\alpha$ and each $\alpha_i$ are in general position with $\beta$, and $\alpha$ is endpoints-fixing-homotopic to the composition $\alpha_1\cdots\alpha_r$, where we assume that each $\alpha_i$ ends at the point where $\alpha_{i+1}$ starts from. If the theorem is true for each of the triples $(\Sigma, \alpha_1, \beta),\cdots, (\Sigma, \alpha_r, \beta)$, then it is also true for $(\Sigma, \alpha, \beta)$.
\item\label{item_reducingsplit} 
Let $\Delta$ be a splitting arc in $\Sigma$. Let $\tilde\alpha$, $\tilde\beta$ be   paths in general position on $\Sigma_\Delta$, and $\alpha$, $\beta$ be their images in $\Sigma$. If the theorem is true for the triple $(\Sigma_\Delta, \tilde\alpha, \tilde\beta)$, then it is also true for $(\Sigma, \alpha, \beta)$. 
\end{enumerate}
\end{proposition}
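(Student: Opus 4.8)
All three parts are local statements, and I would prove them in the stated order so that (i) is available when treating (ii) and (iii). The guiding principle is that the left-hand side of (\ref{equation_quasibracket}) is manifestly invariant under endpoints-fixing homotopy, as it only involves the holonomies $\Hol_\alpha$ and $\Hol_\beta$; so the whole difficulty is to control the combinatorial right-hand side. For \emph{part (i)} the plan is to join any two general-position representatives of the homotopy classes of $(\alpha,\beta)$ by a finite chain of elementary moves and check that the right-hand side of (\ref{equation_quasibracket}) is unchanged under each. There are three types: birth/death of a bigon (a pair of crossings $q,q'$ with $\varepsilon_q(\alpha,\beta)=-\varepsilon_{q'}(\alpha,\beta)$), a triangle move, and an endpoint move in which a starting or ending direction of one path rotates past an endpoint direction of the other at a common marked point. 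For a bigon the two switched paths $\alpha*_q\beta$ and $\alpha*_{q'}\beta$ are homotopic, so $B^q_{\Phi,\alpha,\Psi,\beta}=B^{q'}_{\Phi,\alpha,\Psi,\beta}$ and the two contributions cancel. Triangle moves, and any interior homotopy preserving the crossing combinatorics, deform each switched path without changing its class, hence leave every term untouched. The endpoint move is the subtle one: as $\so\alpha$ crosses $\beta^J$ at a marked point $p$, the term $\varepsilon(\so\alpha,\beta^J)$ jumps by $\pm1$ while a single crossing $q$ is simultaneously created or destroyed near $p$; I would work in a local model at $p$, compute $\Hol_{\alpha*_q\beta}$ to the needed order using that the switched path is homotopic to a short path, and apply Lemma \ref{lemma_taso} (with $\so\Phi=\Ad_g\ta\Phi$) to show that $\varepsilon_q(\alpha,\beta)B^q_{\Phi,\alpha,\Psi,\beta}$ cancels the jump $\pm\bi{\so\Phi_\alpha}{\Psi^J_\beta}$.

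For \emph{part (ii)} I would reduce to $r=2$ by induction and set $u_1=\Hol_{\alpha_1}$, $u_2=\Hol_{\alpha_2}$, so $\Hol_\alpha=u_1u_2$. Since $\brac{\cdot}{\cdot}=P(d\cdot,d\cdot)$ is a biderivation, it suffices to split $d\Phi_\alpha$ into an $\alpha_1$-part and an $\alpha_2$-part; this is exactly the decomposition of the Maurer--Cartan pullback of a product, that is Lemma \ref{lemma_taso}(\ref{item_tasoiii}) applied to $\Phi'(g)=\Phi(gu_2)$ and $\Phi''(g)=\Phi(u_1g)$, which give $\so{\Phi'}(u_1)=\so\Phi(u_1u_2)$ and $\ta{\Phi''}(u_2)=\ta\Phi(u_1u_2)$. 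By linearity of $P$ in its first slot the bracket becomes the sum of the two brackets governed by $(\Sigma,\alpha_1,\beta)$ and $(\Sigma,\alpha_2,\beta)$, to which the hypothesis applies. It then remains to reassemble: one has $\alpha\#\beta=(\alpha_1\#\beta)\sqcup(\alpha_2\#\beta)$, and the identities $\alpha*_q\beta=\alpha_1*_q\beta$ for $q\in\alpha_1\#\beta$ and $\Hol_{\alpha*_q\beta}=u_1\Hol_{\alpha_2*_q\beta}$ for $q\in\alpha_2\#\beta$, combined with $\Ad$-invariance, identify the crossing terms, while the endpoint terms at $\so\alpha=\so{\alpha_1}$ and $\ta\alpha=\ta{\alpha_2}$ match directly.

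For \emph{part (iii)} I would first use $R_\Delta$ to identify holonomies, so that $\Phi^I_{\tilde\alpha}=\Phi^I_\alpha$, every crossing away from $\Delta$ and its switched-path holonomy are the same on both surfaces, and no crossing is created when $\Delta$ is glued back. Theorem \ref{theorem_AMM} and (\ref{equation_fusion}) then give $\brac{\Phi_\alpha}{\Psi_\beta}_{M_G(\Sigma)}=\brac{\Phi_{\tilde\alpha}}{\Psi_{\tilde\beta}}_{M_G(\Sigma_\Delta)}-\rho_\psi(d\Phi_\alpha,d\Psi_\beta)$, where, by Lemma \ref{lemma_formulation}, the correction is $-\rho_\psi(d\Phi_\alpha,d\Psi_\beta)=\tfrac12\big(\bi{\chi^{p_1^R}_{\Phi_\alpha}}{\chi^{p_1^L}_{\Psi_\beta}}-\bi{\chi^{p_1^L}_{\Phi_\alpha}}{\chi^{p_1^R}_{\Psi_\beta}}\big)$, expressed through the endpoint directions of $\tilde\alpha,\tilde\beta$ lying on $p_1^L$ and $p_1^R$. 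Since splitting preserves the intersection numbers of same-side endpoint directions, the only new terms in the right-hand side for $(\Sigma,\alpha,\beta)$ are the cross-side endpoint terms at the merged point $p_1$, and I would match them against this correction by a local angular computation at $p_1$, using crucially that a starting direction points into the surface while an ending direction points out, which is what makes the framing signs $\varepsilon(\alpha^I,\beta^J)$ reproduce the signs $\varepsilon_{I\vee}$ appearing in $\chi^{p_1^L}$ and $\chi^{p_1^R}$.

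The main obstacle, in all three parts, is the same piece of local bookkeeping at boundary marked points: the compensation between a crossing and a change of endpoint-intersection number. It is the endpoint move in (i); in (ii) it is the reconciliation at the junction $p_j$, where $p_j$ is interior to the composite $\alpha$ but an endpoint for the pieces, so the endpoint terms $\varepsilon(\alpha_1^\vee,\beta^J)$, $\varepsilon(\alpha_2^\wedge,\beta^J)$ of the pieces must reproduce the crossing the composite sees when $\beta$ ends at $p_j$; and in (iii) it is the angular analysis at $p_1$. Getting the framing orientations and the $\Ad$-factors to match exactly is where I expect the real work to lie, and it is precisely the content already flagged as needed in Remark \ref{remark_quasibracket}(\ref{item_remarkquasibracket3}).
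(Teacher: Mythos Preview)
Your proposal is correct and follows essentially the same route as the paper: elementary moves for (i) (the paper lists four, including a monogon move you omit, but that one is trivial since self-crossings do not enter the formula), the Leibniz splitting $d\Phi_\alpha\rightsquigarrow d(\Phi_1)_{\alpha_1}+d(\Phi_2)_{\alpha_2}$ at a fixed point for (ii), and the fusion correction $-\rho_\psi(d\Phi_\alpha,d\Psi_\beta)$ computed via Lemma \ref{lemma_formulation} and matched against the new cross-side endpoint terms for (iii). One small slip to fix in (ii): the claimed equality $\alpha\#\beta=(\alpha_1\#\beta)\sqcup(\alpha_2\#\beta)$ is not true in general, since smoothing the corner at the junction $p$ can create up to two \emph{new} crossings $q_J$ whenever $\beta^J\vdash p$ sits between $\ta{\alpha_1}$ and $\so{\alpha_2}$; you correctly flag this in your final paragraph, and the paper resolves it exactly as you anticipate, by showing that the leftover junction-endpoint terms either cancel (when $\ta{\alpha_1},\so{\alpha_2}$ lie on the same side of $\beta^J$) or combine to give $\varepsilon_{q_J}(\alpha,\beta)B^{q_J}_{\Phi,\alpha,\Psi,\beta}$ (when they straddle $\beta^J$), using Remark \ref{remark_quasibracket}(\ref{item_remarkquasibracket2}) with a $\gamma$ homotopic to $\alpha_2^{-1}$.
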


\begin{proof}[Proof of Proposition \ref{proposition_reducing} (\ref{item_homotopy})]
A straightforward generalization of results in \cite{goldman_invariant} \S 5 to surfaces with boundary shows that there exists a sequence of pairs of   paths in general position
$$(\alpha,\beta)=(\alpha_1,\beta_1),(\alpha_2,\beta_2),\cdots,(\alpha_r,\beta_r)=(\alpha',\beta')$$ 
such that $(\alpha_{i+1},\beta_{i+1})$ is obtained from $(\alpha_i,\beta_i)$ by applying one of the following moves ($\omega$1)-($\omega$4), as shown in Figure \ref{figure_monogon}-\ref{figure_bbigon}.

\begin{figure}[h]
\centering \includegraphics[width=1.8in]{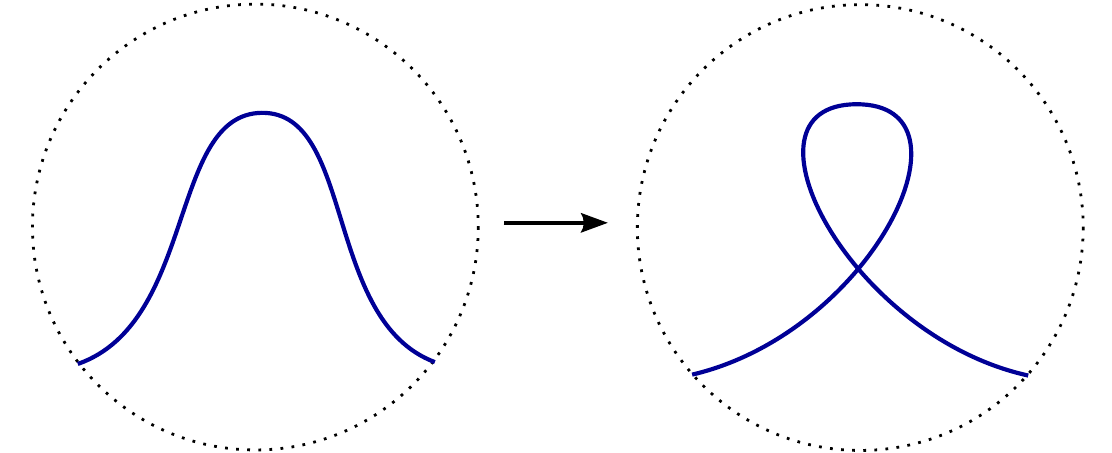}
\caption{($\omega$1): birth-death of monogons}
\label{figure_monogon}
\end{figure}

\begin{figure}[h] 
\centering \includegraphics[width=2.2in]{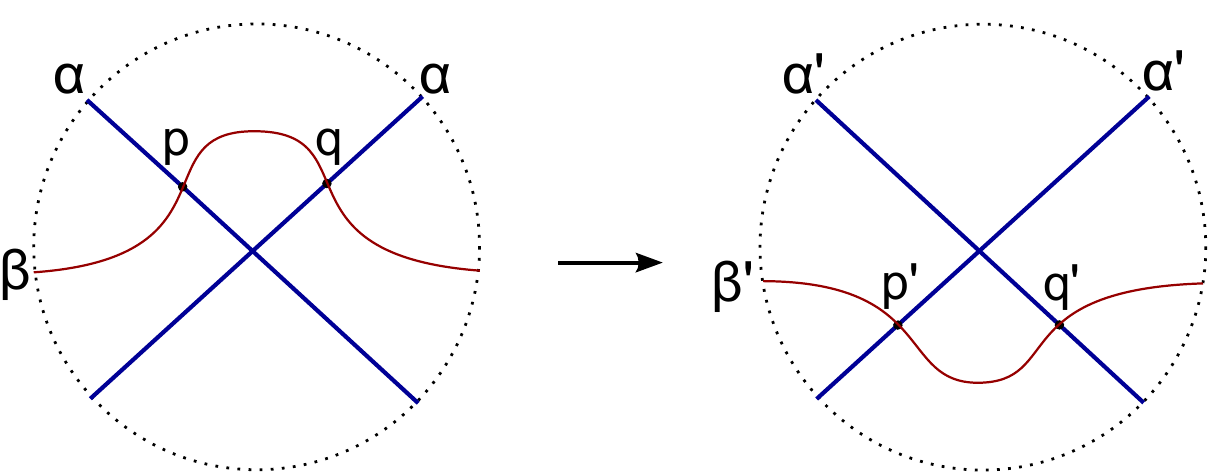} 
\hspace*{1cm}
\includegraphics[width=2.2in]{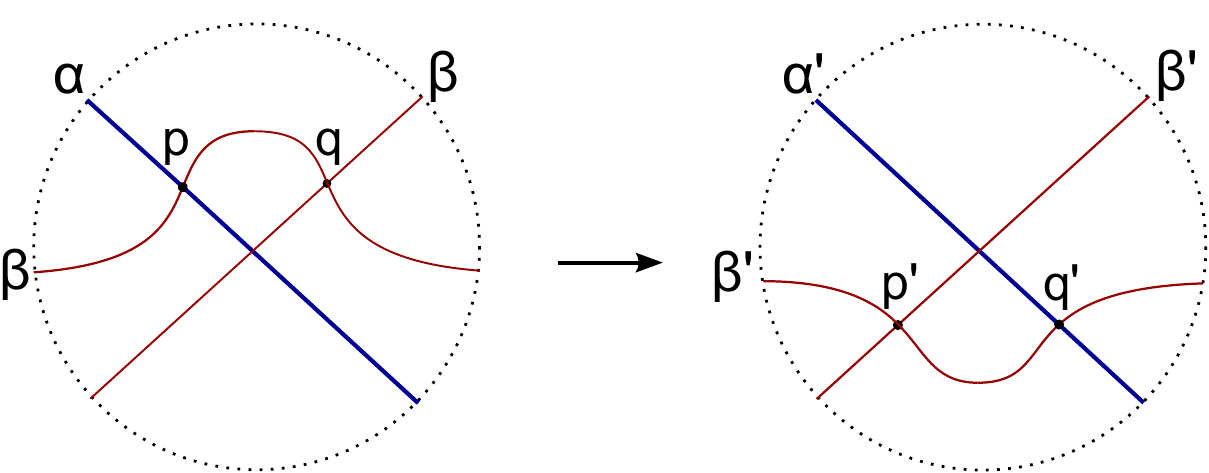} 
\caption{($\omega$2): jumping over a double point}
\label{figure_jump}
\end{figure}

\begin{figure}[h] 
\centering \includegraphics[width=2.2in]{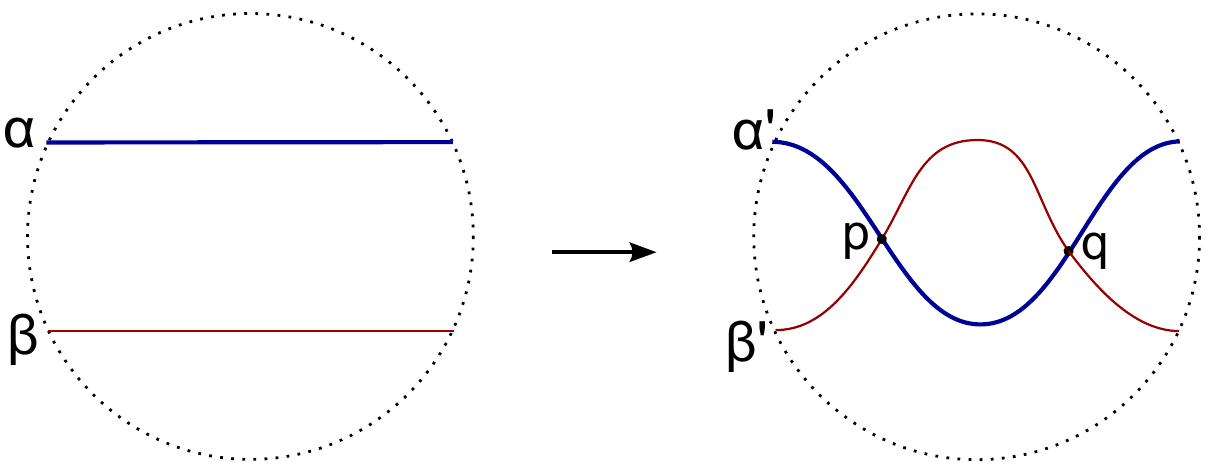} 
\caption{($\omega$3): birth-death of interior bigons}
\label{figure_ibigon}
\end{figure}

\begin{figure}[h] 
\centering \includegraphics[width=2.2in]{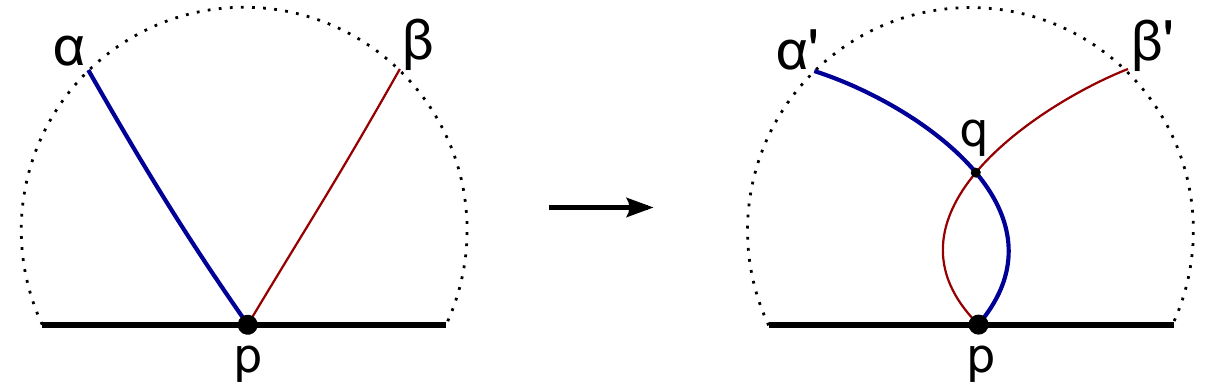} 
\caption{($\omega$4): 
birth-death of bigons with one vertex on the boundary}
\label{figure_bbigon}
\end{figure}

To apply a move ($\omega$j) to $(\alpha,\beta)$, first we find an open set $U$ in $\Sigma$ which is diffeomorphic to a disk (for $j=1,2,3$) or a half disk (for $j=4$), such that there are subintervals of $\alpha$ and $\beta$ in $U$ as shown in the pictures, then we replace these intervals by the new ones shown in the pictures.

We only need to prove that if $(\alpha',\beta')$ is obtained from $(\alpha,\beta)$ by one of the above moves, then replacing $\alpha$ and $\beta$ on the right-hand side of formula (\ref{equation_quasibracket}) by $\alpha'$ and $\beta'$ gives the same result.

Clearly, the move ($\omega$1) does not give rise to any change of the right-hand side of the formula. The next two moves ($\omega$2) and ($\omega$3) do not change the starting/ending directions $\alpha^I,\beta^J (I,J=\wedge, \vee)$, thus do not change the contribution from endpoints. Regarding the crossings, it can be shown (see p.293 of \cite{goldman_invariant} for details) that ($\omega$2) turns two points $p,q\in\alpha\#\beta$ into new ones $p',q'\in\alpha'\#\beta'$, such that 
$\alpha*_p\beta$ and $\alpha*_q\beta$ are homotopic to $\alpha'*_{p'}\beta'$ and $\alpha'*_{q'}\beta'$ respectively, and moreover $\varepsilon_p(\alpha,\beta)=\varepsilon_{p'}(\alpha',\beta')$, $\varepsilon_q(\alpha,\beta)=\varepsilon_{q'}(\alpha',\beta')$; whereas the move ($\omega$3) creates two new interior intersection points $p,q\in\alpha'\#\beta'$ such that $\alpha'*_p\beta'$ and $\alpha'*_q\beta'$ are homotopic to each other and $\varepsilon_p(\alpha',\beta')=-\varepsilon_q(\alpha',\beta')$. As a result, the formula remains unchanged under these two moves.

Finally, let us see what happens when applying the move ($\omega$4) to $(\alpha,\beta)$ near a marked point $p$. For two specific indices $I,J$ such that $\alpha^I, \beta^J\vdash p$, the move reverses the relative position of $\alpha^I$ and $\beta^J$, and creates a new interior intersection point $q\in\alpha'\#\beta'$. The only change of the right-hand side of formula (\ref{equation_quasibracket}) is that the term $\varepsilon(\alpha^I,\beta^J)\bi{\Phi^I_\alpha}{\Psi^J_\beta}$ becomes
$$
\varepsilon(\alpha'^I,\beta'^J)\bi{\Phi^I_\alpha}{\Psi^J_\beta}+\varepsilon_q(\alpha',\beta')B^q_{\Phi,\alpha',\Psi,\beta'}.
$$
We need to identify this with the original term. Observe that $$\varepsilon(\alpha^I,\beta^J)=-\varepsilon(\alpha'^I,\beta'^J)=\frac{1}{2}\varepsilon_q(\alpha',\beta'),$$ so it is sufficient to show
$$
B^q_{\Phi,\alpha',\Psi,\beta'}=\bi{\Phi^I_\alpha}{\Psi^J_\beta}.
$$
This follows from the expression of $B^q_{\Phi,\alpha',\Psi,\beta'}$ given in Remark \ref{remark_quasibracket} (\ref{item_remarkquasibracket2}): put $\gamma:=(\alpha'^{\varepsilon_{I\wedge}})*_q(\beta'^{\varepsilon_{J\vee}})$, then we have
$$
B^q_{\Phi,\alpha',\Psi,\beta'}=\bi{\Phi^I_\alpha}{\Ad_{\Hol_\gamma}\Psi^J_\beta}.
$$
But it is easy to see that $\gamma$ is the path which starts from $p$, runs along $\alpha$ until $q$, and then comes back to $p$ along $\beta$, hence is homotopically trivial. 
\end{proof}

\begin{proof}[Proof of Proposition \ref{proposition_reducing} (\ref{item_composition})]
It is sufficient to treat the $r=2$ case, from which the general case follows by recurrence. For brevity, we put 
$$u_1=\Hol_{\alpha_1},\quad u_2=\Hol_{\alpha_2},\quad u=\Hol_\alpha=u_1u_2,\quad v=\Hol_\beta.$$ 

We fix a point $m_0\in M_G(\Sigma)$, define  $u^{(1)},u^{(2)}: M_G(\Sigma)\rightarrow G$
by
$$
u^{(1)}(m)=u_1(m)u_2(m_0),\quad u^{(2)}(m)=u_1(m_0)u_2(m)\quad\forall m\in M_G(\Sigma),
$$
and define $\Phi_1, \Phi_2\in\Func{G}$ by
$$
\Phi_1(g)=\Phi(gu_2(m_0)),\quad \Phi_2(g)=\Phi(u_1(m_0)g)\quad\forall g\in G.
$$
Then the derivation of $\Phi_\alpha=\Phi(u)=\Phi(u_1u_2)\in\Func{M_G(\Sigma)}$ at the point $m_0$ is the sum of the derivations of $(\Phi_1)_{\alpha_1}=\Phi_1(u_1)$ and $(\Phi_2)_{\alpha_2}=\Phi_2(u_2)$. As a result, we get

\begin{align}
&\brac{\Phi_\alpha}{\Psi_\beta}(m_0)=\brac{(\Phi_1)_{\alpha_1}}{\Psi_\beta}(m_0)+\brac{(\Phi_2)_{\alpha_2}}{\Psi_\beta}(m_0)\label{equation_composition2}\\
=\sum_{I,J}&\varepsilon(\alpha_1^I,\beta^J)\bi{\Phi_1^I(u_1(m_0))}{\Psi^J(v(m_0))}+\sum_{q_1\in\alpha_1\#\beta}\varepsilon_{q_1}(\alpha_1,\beta)B^{q_1}_{\Phi_1,\alpha_1,\Psi,\beta}(m_0)
\nonumber\\
+\sum_{I,J}&\varepsilon(\alpha_2^I,\beta^J)\bi{\Phi_2^I(u_2(m_0))}{\Psi^J(v(m_0))}+\sum_{q_2\in\alpha_2\#\beta}\varepsilon_{q_2}(\alpha_2,\beta)B^{q_2}_{\Phi_2,\alpha_2,\Psi,\beta}(m_0).
\nonumber
\end{align}
The second equality is because of the hypothesis that the theorem is true for the triples $(\Sigma,\alpha_1, \beta)$ and  $(\Sigma,\alpha_2, \beta)$.

Our goal is to show that the right-hand side of (\ref{equation_composition2}) coincides with the right-hand side of formula (\ref{equation_quasibracket}) evaluated at $m_0$ for some $\alpha$ homotopic to $\alpha_1\alpha_2$.

Let us choose $\alpha$ in the following way. First we modify $\alpha_1$ and $\alpha_2$ by homotopy such that all starting/ending directions of the three paths $\alpha_1, \alpha_2, \beta$ are distinct. Let $p$ be the ending point of $\alpha_1$. Then we get $\alpha$ by smoothing the ``corner" of $\alpha_1\alpha_2$ within a small neighborhood $U$ of $p$, see Figure \ref{figure_smoothing}.

The smoothing does not change the crossings of $\alpha_1\alpha_2$ with $\beta$ outside $U$, whereas it creates up to two crossings $q_J$ ($J=\wedge, \vee$) in $U$, depending on the relative position of $\alpha_1^\vee$, $\alpha_2^\wedge$ and $\beta^J$. There are two cases:
\begin{enumerate}[(a)]
\item\label{item_compositiona} If $\beta^J$ does not lie on $p$, or if $\beta^J\vdash p$ and the directions $\ta{\alpha}_1$, $\so{\alpha}_2$ are on the same side of $\beta^J$ (i.e., both on the left or both on the right), then the smoothing creates no crossing near $\beta^J$.

\item\label{item_compositionb} If $\beta^J\vdash p$ and the directions $\ta{\alpha}_1$, $\so{\alpha}_2$ are on the two sides of $\beta^J$ respectively, then the smoothing creates a crossing $q_J\in U$. 
\end{enumerate}
\begin{figure}[h]
\centering \includegraphics[width=2.7in]{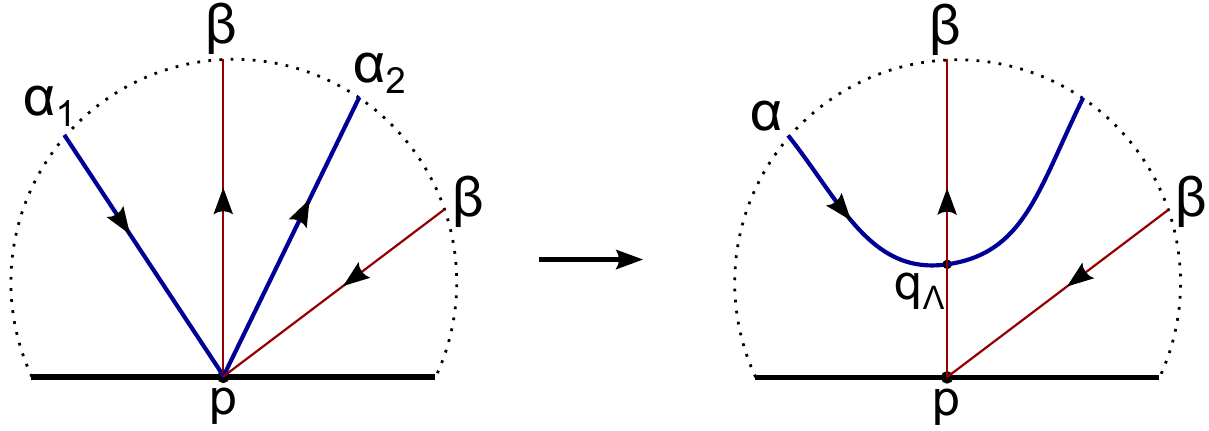} 
\caption{A typical local picture: here $\beta^\vee$ belongs to the case (\ref{item_compositiona}), and $\beta^\wedge$ to the case (\ref{item_compositionb})}\label{figure_smoothing}.
\end{figure}

Now we compute the right-hand side of (\ref{equation_composition2}). By Lemma \ref{lemma_taso} (\ref{item_tasoiii}), we have
the following identities of functions on $G$,
$$
\so{\Phi}_1(g)=\so{\Phi}(gu_2(m_0)),\quad \ta{\Phi}_1(g)=\Ad_{u_2(m_0)}\ta{\Phi}(gu_2(m_0)),
$$
and 
$$
\so{\Phi}_2(g)=\Ad_{u_1(m_0)}^{-1}\so{\Phi}(u_1(m_0)g),\quad \ta{\Phi}_2(g)=\ta{\Phi}(u_1(m_0)g).
$$
It follows that
\begin{equation}\label{equation_composition3}
\begin{array}{l}
\so{\Phi}_1(u_1(m_0))=\so{\Phi}(u(m_0)),\quad \ta{\Phi}_1(u_1(m_0))=\Ad_{u_2(m_0)}\ta{\Phi}(u(m_0)),\\
\so{\Phi}_2(u_2(m_0))=\Ad_{u_1(m_0)}^{-1}\so{\Phi}(u(m_0)),\quad \ta{\Phi}_2(u_2(m_0))=\ta{\Phi}(u(m_0)).
\end{array}
\end{equation}
Using (\ref{equation_composition3}), we find that certain terms in (\ref{equation_composition2}) coincide with those in  (\ref{equation_quasibracket}). Namely,
$$
\varepsilon(\alpha_1^\wedge,\beta^J)\bi{\Phi_1^\wedge(u_1(m_0))}{\Psi^J(v(m_0))}=\varepsilon(\alpha^\wedge,\beta^J)\bi{\Phi^\wedge(u(m_0))}{\Psi^J(v(m_0))},
$$
$$
\varepsilon(\alpha_2^\vee,\beta^J)\bi{\Phi_2^\vee(u_2(m_0))}{\Psi^J(v(m_0))}=\varepsilon(\alpha^\vee,\beta^J)\bi{\Phi^\vee(u(m_0))}{\Psi^J(v(m_0))},
$$
and
$$
\varepsilon_{q_i}(\alpha_i,\beta)B^{q_i}_{\Phi_i,\alpha_i,\Psi,\beta}(m_0)=\varepsilon_{q_i}(\alpha,\beta)B^{q_i}_{\Phi,\alpha,\Psi,\beta}(m_0),\quad\forall q_i\in\alpha_i\#\beta
$$
for $i=1,2$.

We see that each term in (\ref{equation_quasibracket}), except for those coming from the possible new crossings $q_\vee$ and $q_\wedge$, coincides with a term in (\ref{equation_composition2}). It remains to be shown that\begin{align*}
\varepsilon(\alpha_1^\vee,\beta^J)\bi{\Phi_1^\vee(u_1(m_0))}{\Psi^J(v(m_0))}+\varepsilon(\alpha_2^\wedge,\beta^J)\bi{\Phi_2^\wedge(u_2(m_0))}{\Psi^J(v(m_0))}\\
=\left\{ 
\begin{array}{ll}
0&\quad\mbox{in the case (\ref{item_compositiona})},\\
\varepsilon_{q_J}(\alpha,\beta)B^{q_J}_{\Phi,\alpha,\Psi,\beta}(m_0)&\quad\mbox{in the case (\ref{item_compositionb})}.
\end{array}
\right.
\end{align*}

In Case (\ref{item_compositiona}), $\varepsilon(\alpha_1^\vee,\beta^J)$ and  $\varepsilon(\alpha_1^\vee,\beta^J)$ are opposite, and it follows from (\ref{equation_composition3}) and Lemma \ref{lemma_taso} (\ref{item_tasoiii}) that
\begin{equation}\label{equation_composition4}
\ta{\Phi}_1(u_1(m_0))=\so{\Phi}_2(u_2(m_0)).
\end{equation}
Thus we get the required equality.

In Case (\ref{item_compositionb}), it is easy to see that
$$
\varepsilon(\ta{\alpha}_1,\beta^J)=\varepsilon(\so{\alpha_2},\beta^J)=\frac{1}{2}\varepsilon_{q_J}(\alpha,\beta).
$$
Using (\ref{equation_composition4}) and (\ref{equation_composition3}) again, we get
\begin{align*}
&\varepsilon(\alpha_1^\vee,\beta^J)\bi{\Phi_1^\vee(u_1(m_0))}{\Psi^J(v(m_0))}+\varepsilon(\alpha_2^\wedge,\beta^J)\bi{\Phi_2^\wedge(u_2(m_0))}{\Psi^J(v(m_0))}\\
&=\varepsilon_{q_J}(\alpha,\beta)\bi{\ta{\Phi}_1(u(m_0))}{\Ad_{u_2(m_0)}^{-1}\Psi^J(v(m_0))}.
\end{align*}
On the other hand, by Remark \ref{remark_quasibracket} (\ref{item_remarkquasibracket2}),
$$
B^{q_J}_{\Phi,\alpha,\Psi,\beta}=\bi{\ta{\Phi}_1(u)}{\Ad_{\Hol_\gamma}\Psi^J(v)},
$$  
where $\gamma:=(\alpha^{-1})*_{q_\vee}\beta$ if $J=\vee$, and $\gamma:=(\alpha^{-1})*_{q_\wedge}(\beta^{-1})$ if $J=\wedge$. Noting that $\gamma$ is homotopic to $\alpha_2$, we get $\Hol_\gamma=u_2$. This concludes the proof of the required equality.
\end{proof}

\begin{proof}[Proof of Proposition \ref{proposition_reducing} (\ref{item_reducingsplit})]

Suppose that the splitting arc $\Delta$ issues from a marked point $p\in\partial\Sigma$, and $p$ split into two marked point $p_1, p_2\in\partial\Sigma_\Delta$, where $p_1$ is on the left and $p_2$ on the right (see \S \ref{subsection_splitting}). For $i=1,2$, we let $\mu_i:M_G(\Sigma_\Delta)\rightarrow G$ denote the reversed boundary holonomy at $p_i$, and let $\chi_f^{(i)}$ denote the variation map (see (\ref{equation_variation}) for the definition) of $f\in\Func{M_G(\Sigma_\Delta)}$ with respect to the $G$-action on $M_G(\Sigma_\Delta)$ associated to $p_i$.

We consider $M:=M_G(\Sigma_\Delta)\cong M_G(\Sigma)$ as the same manifold via the splitting homoemorphism $R_\Delta$. If $\tilde\alpha$ is an   path on $\Sigma_\Delta$ and $\alpha$ is its image on $\Sigma$, then $\Hol_{\tilde\alpha}:M_G(\Sigma_\Delta)\rightarrow G$ and $\Hol_\alpha:M_G(\Sigma)\rightarrow G$ are same map, in particular $\Phi_{\tilde\alpha}=\Phi_\alpha\in\Func{M}$.

It follows from Theorem \ref{theorem_AMM} and the definition of fusion (\ref{equation_fusion}) that
$$ 
\brac{\Phi_\alpha}{\Psi_\beta}_{M_G(\Sigma)}=\brac{\Phi_{\tilde\alpha}}{\Psi_{\tilde\beta}}_{M_G(\Sigma_\Delta)}-\frac{1}{2}\bi{\chi^{(1)}_{\Phi_{\tilde\alpha}}}{\chi^{(2)}_{\Psi_{\tilde\beta}}}+\frac{1}{2}\bi{\chi^{(1)}_{\Psi_{\tilde\beta}}}{\chi^{(2)}_{\Phi_{\tilde\alpha}}}.
$$
The theorem is true for $(\Sigma_\Delta, \tilde\alpha,\tilde\beta)$ by hypothesis, hence
\begin{align*}
\brac{\Phi_{\tilde\alpha}}{\Psi_{\tilde\beta}}_{M_G(\Sigma_\Delta)}&=\sum_{I,J}\varepsilon(\tilde\alpha^I,\tilde\beta^J)\bi{\Phi^I_{\tilde\alpha}}{\Psi^J_{\tilde\beta}}+\sum_{q\in\tilde\alpha\#\tilde\beta}\varepsilon_q(\tilde\alpha,\tilde\beta)B^q_{\Phi,\tilde\alpha,\Psi,\tilde\beta}\\
&=\sum_{I,J}\varepsilon(\tilde\alpha^I,\tilde\beta^J)\bi{\Phi^I_\alpha}{\Psi^J_\beta}+\sum_{q\in\alpha\#\beta}\varepsilon_q(\alpha,\beta)B^q_{\Phi,\alpha,\Psi,\beta}.
\end{align*}
To prove that the theorem is true for $(\Sigma, \alpha, \beta)$, we need to show
\begin{align}\label{equation_reducingsplit}
&\sum_{I,J}\varepsilon(\tilde\alpha^I,\tilde\beta^J)\bi{\Phi^I_\alpha}{\Psi^J_\beta}-\frac{1}{2}\bi{\chi^{(1)}_{\Phi_{\tilde\alpha}}}{\chi^{(2)}_{\Psi_{\tilde\beta}}}+\frac{1}{2}\bi{\chi^{(1)}_{\Psi_{\tilde\beta}}}{\chi^{(2)}_{\Phi_{\tilde\alpha}}}\\
&=\sum_{I,J}\varepsilon(\alpha^I,\beta^J)\bi{\Phi^I_\alpha}{\Psi^J_\beta}.\nonumber
\end{align}
By Lemma \ref{lemma_formulation}, for $i=1,2$ we have
$$
\chi^{(i)}_{\Phi_{\tilde\alpha}}=\sum_{I: \tilde\alpha^I\vdash p_i}\varepsilon_{I\vee}\Phi^I_\alpha,
\quad
\chi^{(i)}_{\Psi_{\tilde\beta}}=\sum_{J: \tilde\beta^J\vdash p_i}\varepsilon_{J\vee}\Psi^J_\beta.
$$
Therefore, the left-hand side of (\ref{equation_reducingsplit}) equals
$$
\left(\sum_{I,J}\varepsilon(\tilde\alpha^I,\tilde\beta^J)-\sum_{\tilde\alpha^I\vdash p_1,\tilde\beta^J\vdash p_2}\frac{\varepsilon_{IJ}}{2}+\sum_{\tilde\alpha^I\vdash p_2,\tilde\beta^J\vdash p_1}\frac{\varepsilon_{IJ}}{2}\right)\bi{\Phi^I_\alpha}{\Psi^J_\beta}.
$$
Consider both sides of (\ref{equation_reducingsplit})  as a sum of four terms, corresponding to the four choices of $(I, J)$. Comparing the local pictures of $(\Sigma_\Delta, \tilde\alpha, \tilde\beta)$ and $(\Sigma, \alpha, \beta)$ near $p$, we see that for any $(I, J)$, 
\begin{itemize}
\item
If $\tilde\alpha^I\vdash p_1$ and $\tilde\beta^J\vdash p_2$, then $$\varepsilon(\tilde\alpha^I, \tilde\beta^J)=0,\quad
-\frac{\varepsilon_{IJ}}{2}=\varepsilon(\alpha^I, \beta^J);
$$
\item
If $\tilde\alpha^I\vdash p_2$ and $\tilde\beta^J\vdash p_1$,  then $$\varepsilon(\tilde\alpha^I, \tilde\beta^J)=0,\quad
\frac{\varepsilon_{IJ}}{2}=\varepsilon(\alpha^I, \beta^J);
$$
\item
If it is not the above two cases, then $\varepsilon(\tilde\alpha^I, \tilde\beta^J)=\varepsilon(\alpha^I, \beta^J)$.
\end{itemize}
This implies the required equality (\ref{equation_reducingsplit}).
\end{proof}

Using the above proposition, we will essentially reduce Theorem \ref{theorem_quasibracket} to the following three simplest situations, which can be verified directly.
\begin{proposition}\label{proposition_simplequasibracket}
\begin{enumerate}[(i)]
\item\label{item_disjointquasibracket}
If $\Sigma=\Sigma_1\sqcup\Sigma_2$ is a disjoint union, and $\alpha$, $\beta$ are contained in $\Sigma_1$, $\Sigma_2$, respectively, then the theorem is true for $(\Sigma,\alpha,\beta)$.

\item\label{item_holonomyquasibracket}
Let $\nu$ be an   loop which is homotopic to the reversed boundary loop of $\Sigma$ at a marked point $p$, and $\beta$ be any   path in general position with $\nu$. Then the theorem is true for $(\Sigma,\nu,\beta)$.

\item\label{item_uquasibracket}
Let $\alpha$ be an   paths on $\Sigma_{0,2}$ joining the the two marked points. Then $$\brac{\Phi_\alpha}{\Psi_{\alpha}}_{M_G(\Sigma_{0,2})}=0,\quad\forall \Phi,\Psi\in\Func{G}.$$ As a result, let $\alpha'$ be a path in general position with $\alpha$ and homotopic to $\alpha$, then the theorem is true for $(\Sigma_{0,2},\alpha,\alpha')$. 
\end{enumerate}
\end{proposition}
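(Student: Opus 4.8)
I would establish the three statements by direct computation, in increasing order of difficulty; the only serious work is in (\ref{item_holonomyquasibracket}). Case (\ref{item_disjointquasibracket}) is immediate, since both sides of (\ref{equation_quasibracket}) vanish. As $\Sigma=\Sigma_1\sqcup\Sigma_2$, no fusion links the two components and the representation space is a plain product $M_G(\Sigma)=M_G(\Sigma_1)\times M_G(\Sigma_2)$ with quasi-Poisson tensor $P=P_1+P_2$. Because $\Phi_\alpha$ is pulled back from the first factor and $\Psi_\beta$ from the second, $d\Phi_\alpha$ kills all $M_G(\Sigma_2)$-directions and $d\Psi_\beta$ all $M_G(\Sigma_1)$-directions, whence $\brac{\Phi_\alpha}{\Psi_\beta}=(P_1+P_2)(d\Phi_\alpha,d\Psi_\beta)=0$. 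On the other side, $\alpha$ and $\beta$ share no marked point and $\alpha\#\beta=\emptyset$, so every $\varepsilon(\alpha^I,\beta^J)$ and the entire crossing sum vanish.

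For (\ref{item_uquasibracket}) I would use the explicit double. By Example \ref{example_double} and \S\ref{subsection_qh}, $M_G(\Sigma_{0,2})\cong D(G)=G\times G$ carries $P=\tfrac12\sum_i\bigl(e_i^{1,L}\wedge e_i^{2,R}+e_i^{1,R}\wedge e_i^{2,L}\bigr)$, and under this identification $\Hol_\alpha$ is the first-factor coordinate $a$. Thus $\Phi_\alpha$ and $\Psi_\alpha$ are functions of $a$ alone, while every term of $P$ pairs a first-factor vector field with a second-factor one; since the latter annihilates any function of $a$, we get $\brac{\Phi_\alpha}{\Psi_\alpha}=0$. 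For the ``as a result'' clause, $\alpha'$ homotopic to $\alpha$ gives $\Hol_{\alpha'}=\Hol_\alpha$, hence the left side of (\ref{equation_quasibracket}) for $(\alpha,\alpha')$ is again $\brac{\Phi_\alpha}{\Psi_\alpha}=0$; and choosing $\alpha'$ with $\alpha\#\alpha'=\emptyset$, only the two endpoint terms survive on the right side and cancel by the same elementary $\Ad$-invariance computation that proves Corollary \ref{corollary_alphaalpha}, using $\so{\Phi}(a)=\Ad_a\ta{\Phi}(a)$ and $\so{\Psi}(a)=\Ad_a\ta{\Psi}(a)$.

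The substantive case is (\ref{item_holonomyquasibracket}), where I would exploit that $\Hol_\nu=\mu_p$ is the component of the moment map attached to $p$, so its defining relation (\ref{equation_diffofhol}) evaluates the bracket. Writing $\chi^p$ for the variation map of the $G$-action at $p$, and combining $\brac{\Phi_\nu}{\Psi_\beta}=-d\Phi_\nu\bigl(P^\sharp(d\Psi_\beta)\bigr)$, the identity $d\Phi_\nu=\bi{\ta{\Phi}_\nu}{\mu_p^*\theta}$, relation (\ref{equation_diffofhol}) applied to $f=\Psi_\beta$ with $\mu=\mu_p$, and Lemma \ref{lemma_taso} (\ref{item_tasoi}) together with $\Ad$-invariance, one obtains
\begin{equation*}
\brac{\Phi_\nu}{\Psi_\beta}=\tfrac12\bi{\so{\Phi}_\nu+\ta{\Phi}_\nu}{\chi^p_{\Psi_\beta}}=\tfrac12\sum_{J:\,\beta^J\vdash p}\varepsilon_{J\vee}\Bigl(\bi{\Phi^\wedge_\nu}{\Psi^J_\beta}+\bi{\Phi^\vee_\nu}{\Psi^J_\beta}\Bigr),
\end{equation*}
the last step by Lemma \ref{lemma_formulation}.

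It then remains to show that the right side of (\ref{equation_quasibracket}) equals this. Using the homotopy invariance from Proposition \ref{proposition_reducing} (\ref{item_homotopy}), I would choose a representative of $\nu$ hugging the boundary component at $p$. A local analysis in a half-disk at $p$ shows that $\so\nu$ and $\ta\nu$ fall on the same side of each $\beta^J\vdash p$, giving $\varepsilon(\nu^\wedge,\beta^J)=\varepsilon(\nu^\vee,\beta^J)=\tfrac12\varepsilon_{J\vee}$ and reproducing precisely the two endpoint terms above. The main obstacle is the crossing bookkeeping: one must verify that, for this representative, the interior crossings of $\nu$ with $\beta$ contribute nothing---either because $\nu$ can be routed to avoid $\beta$ near $p$, or because the crossings forced when $\beta$ re-enters the thin collar come in pairs with cancelling $B^q$ (as for move $(\omega 3)$), after identifying the factor $\Ad_{\Hol_{\nu*_q\beta}}$ by means of $\nu$ being homotopic to the reversed boundary loop at $p$. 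Once this local accounting is settled the two sides match term by term, and the convenient positions chosen above are legitimate by Proposition \ref{proposition_reducing} (\ref{item_homotopy}).
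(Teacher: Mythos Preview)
Your approach matches the paper's in all three parts: (\ref{item_disjointquasibracket}) is declared trivial, (\ref{item_uquasibracket}) uses the explicit double $D(G)$ and the observation that $\Phi_\alpha,\Psi_\alpha$ depend only on the first factor while every term of $P$ couples the two factors, and (\ref{item_holonomyquasibracket}) feeds the moment-map identity (\ref{equation_diffofhol}) into $\brac{\Phi_\nu}{\Psi_\beta}=-d\Phi_\nu(P^\sharp(d\Psi_\beta))$ to obtain $\tfrac12\bi{\so\Phi_\nu+\ta\Phi_\nu}{\chi^p_{\Psi_\beta}}$, then compares with the endpoint terms of (\ref{equation_quasibracket}). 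The one point you leave hedged---the crossing bookkeeping in (\ref{item_holonomyquasibracket})---is resolved in the paper precisely by your first option: since $\beta$ meets the boundary component through $p$ only at $p$ itself, a representative of $\nu$ in a sufficiently thin collar of that component has $\nu\#\beta=\emptyset$, so no pairwise-cancellation argument is needed.
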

\begin{proof} 
(\ref{item_disjointquasibracket}) is clear. We proceed to show (\ref{item_holonomyquasibracket}).

Because of Proposition \ref{proposition_reducing} (\ref{item_homotopy}), we can modify $\nu$ by homotopy. Let us bring $\nu$ to a small neighborhood of the boundary, such that $\nu$ and $\beta$ have no crossings, and any $\beta^J\vdash p$ ($J=\wedge, \vee$) lies between $\nu^\vee$ and $\nu^\wedge$. Put $\delta_p(\beta^J)=1$ if $\beta^J\vdash p$, and otherwise $\delta_p(\beta^J)=0$. Then it is easy to see that the $\varepsilon(\nu^I,\beta^J)$'s are given by
\begin{equation}\label{equation_holonomyquasibracket1}
\varepsilon(\nu^I,\ta{\beta})=\delta_p(\ta{\beta}), \quad \varepsilon(\nu^I,\so{\beta})=-\delta_p(\so{\beta}), \mbox{ for } I=\wedge, \vee.
\end{equation}

Now we compute the quasi-Poisson bracket. Since $\mu:=\Hol_{\nu}$ is a component of the moment map $M_G(\Sigma)\rightarrow G^b$, (\ref{equation_diffofhol}) implies that\begin{align}\label{equation_holonomyquasibracket2}
&\brac{\Phi_\nu}{\Psi_\beta}_{M_G(\Sigma)}=-d\Phi_\nu(P^\sharp(d\Psi_\beta))=-\bi{\ta{\Phi}(\mu)}{\nu^*\theta(P^\sharp(d\Psi_\beta))}\\
&=\frac{1}{2}\bi{\ta{\Phi}(\mu)}{(1+\Ad_\mu^{-1})\chi_{\Psi_\beta}^p}=\frac{1}{2}\bi{\ta{\Phi}(\mu)+\so{\Phi}(\mu)}{\chi^p_{\Psi_\beta}},\nonumber
\end{align}
where $\chi^p_f$ denotes the variation function of $f\in\Func{M_G(\Sigma)}$ with respect to the $G$-action associated to $p$. By Lemma \ref{lemma_formulation}, we have $$\chi^p_{\Psi_\beta}=-\delta_p(\so{\beta})\so{\Psi}_\beta+\delta_p(\ta{\beta})\ta{\Psi}_\beta.$$
Inserting this into (\ref{equation_holonomyquasibracket2}) and using (\ref{equation_holonomyquasibracket1}), we conclude that
$$
\brac{\Phi_\alpha}{\Psi_\beta}_{M_G(\Sigma)}=\sum_{I,J}\varepsilon(\nu^I,\beta^J)\bi{\Phi^I(\mu)}{\Psi^J_\beta},
$$
which agrees with formula (\ref{equation_quasibracket}) because $\nu$ and $\beta$ have no crossings.

To prove (\ref{item_uquasibracket}), let $p_1$ and $p_2$ be the starting and ending point of $\alpha$, respectively, and $\beta$ be the boundary loop at $p_2$. Set $a:=\Hol_\alpha$ and $b:=\Hol_{\beta\alpha^{-1}}$.
 $$(a,b):M_G(\Sigma_{0,2})\overset\sim\rightarrow G\times G=D(G)$$ identifies $M_G(\Sigma_{0,2})$ with the double $D(G)$, on which the canonical quasi-Poisson bivector field $P$ is given in Example \ref{example_double}.

To prove the first assertion, it is sufficient to show that $$a^*\theta(P^\sharp(d\Phi_\alpha))=0.$$
This is done by straightforward computation. In fact, using $d\Phi_\alpha=d(\Phi\circ a)=\bi{\ta{\Phi}(a)}{a^*\theta}$ and the expression of $P$, we get
$$
P^\sharp(d\Phi_\alpha)=\frac{1}{2}\sum_i\bi{\ta{\Phi}(a)}{a^*\theta(e_i^{1,L})}e_i^{2,R}+\frac{1}{2}\sum_i\bi{\ta{\Phi}(a)}{a^*\theta(e_i^{1,R})}e_i^{2,L},
$$
whence $P^\sharp(d\Phi_\alpha)$ is tangent to the second factor of $D(G)$, and the required property follows.

Finally, we have already seen in the proof of Corollary \ref{corollary_alphaalpha} that the the theorem gives zero when applied to $(\Sigma_{0,2},\alpha,\alpha')$. Thus the theorem is true for $(\Sigma_{0,2},\alpha,\alpha')$.
\end{proof}

We are now in position to prove the theorem.
\begin{proof}[Proof of Theorem \ref{theorem_quasibracket}]
Since any   path on $\Sigma$ is homotopic to the composition of a number of simple   paths (this follows, e.g., from the presentation of the fundamental groupoid $\pi_1(\Sigma)$ given in \S \ref{subsection_surface}), applying Proposition \ref{proposition_reducing} (\ref{item_homotopy}), it is sufficient to prove that the theorem is true for any  $(\Sigma,\alpha,\beta)$ where $\alpha$ and $\beta$ are simple   paths in general position. 

We shall now do a further reduction to the case where $\alpha$ and $\beta$ have no crossings. If $\alpha\#\beta$ is non-empty, suppose that it consists of points $q_1,\cdots, q_r$ ordered by the orientation of $\alpha$. We shall decompose $\alpha$ into   paths $\alpha_0,\cdots,\alpha_r$ in the following way, such that each $\alpha_i$ is simple and has no crossing with $\beta$. Then the required reduction follows by applying Proposition \ref{proposition_reducing} (\ref{item_homotopy}) again. First, $\alpha_0$ is obtained from the path $\alpha*_{q_1}\beta$ by smoothing its corner at $q_1$ and homotoping it away from $\beta$. Then we obtain $\alpha_i$ ($1\leq i\leq r-1$) from $\beta^{-1}*_{p_i}\alpha*_{p_{i+1}}\beta$, and $\alpha_r$ from $(\beta^{-1})*_{p_r}\alpha$ in a similar way. See Figure \ref{figure_decomposition}.

\begin{figure}[h] 
\centering \includegraphics[width=2.8in]{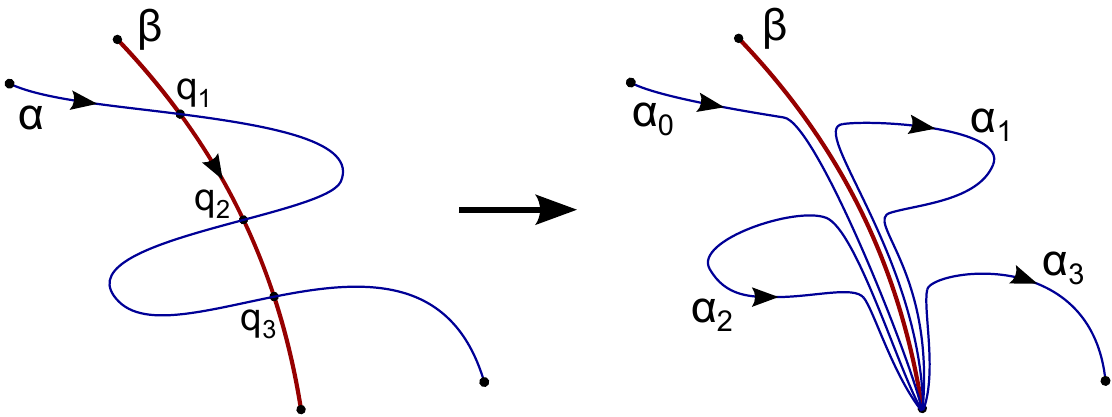} 
\caption{Decomposing $\alpha$ into paths which do not cross $\beta$.}\label{figure_decomposition} 
\end{figure}

Therefore, we only need to prove the theorem for any triple $(\Sigma,\alpha,\beta)$ where $\alpha$ and $\beta$ are simple   paths on $\Sigma$ in general position and without crossings. By Lemma \ref{lemma_firstreduction}, we could exchange the roles of $\alpha$ and $\beta$, reverse the orientation of $\Sigma$, or replace $\alpha$ and/or $\beta$ by their inverse. Performing these modifications if necessary, we can always bring the relative configuration of $\alpha$ and $\beta$ into the following three situations.

\begin{enumerate}[(a)]
\item\label{item_simple1}  The path $\alpha$ is a simple loop issuing from a marked point $p$. Furthermore, we assume that $\so{\alpha}$ is on the left of $\ta{\alpha}$, and also on the left of any $\beta^J\vdash p$ (where $J=\wedge, \vee$). 

\item\label{item_simple2} Both $\alpha$ and $\beta$ are embedded segments, and they share at most one endpoint. Furthermore, we assume that their common endpoint $p$, if exists, is the starting point of both $\alpha$ and $\beta$, and $\so{\alpha}$ is on the left of $\so{\beta}$.

\item\label{item_simple3} Both $\alpha$ and $\beta$ are embedded segments, and they both go from a marked point $p_1$ to another $p_2$.
\end{enumerate}

In case (\ref{item_simple1}), we can find a splitting arc $\Delta$ issuing from $p$ and homotopic to $\alpha$, such that $\Delta$ is disjoint from $\alpha$ and $\beta$ except at $p$ (see Figure \ref{figure_case12}). Thus $\alpha$ and $\beta$ lifts to   paths $\tilde\alpha$ and $\tilde\beta$ on the split surface $\Sigma_\Delta$. By Proposition \ref{proposition_reducing} (\ref{item_reducingsplit}), it is sufficient to prove the theorem for the triple $(\Sigma_\Delta,\tilde\alpha,\tilde\beta)$. But $\tilde\alpha$ is homotopic to the reversed boundary loop of $\Sigma_\Delta$ at the left split marked point, so we conclude by applying Proposition \ref{proposition_simplequasibracket} (\ref{item_holonomyquasibracket}).
\begin{figure}[h] 
\centering \includegraphics[width=3.6in]{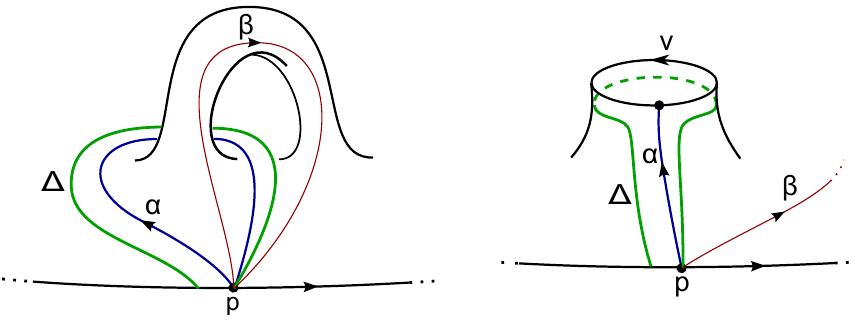} 
\caption{Typical situations in case (\ref{item_simple1}) (left) and case (\ref{item_simple2}) (right).}\label{figure_case12} 
\end{figure}

In case (\ref{item_simple2}), the argument is similar. Let $p$ be the starting point of $\alpha$, and $\nu$ the reversed boundary loop at the ending point of $\alpha$. We can find a splitting arc $\Delta$ issuing from $p$ and homotopic to $\alpha\nu\alpha^{-1}$, such that $\Delta$ is disjoint from $\alpha$ and $\beta$ except at $p$ (see Figure \ref{figure_case12}). The split surface $\Sigma_\Delta$ is the disjoint union of an annulus $\Sigma_{0,2}$ and a surface $\Sigma'$, whose number of boundary components is one less than $\Sigma$. The lifts $\tilde\alpha$ and $\tilde\beta$ of $\alpha$ and $\beta$ are contained in $\Sigma_{0,2}$ and $\Sigma'$, respectively. This time we conclude by applying Proposition \ref{proposition_simplequasibracket} (\ref{item_disjointquasibracket}).

Finally, in case (\ref{item_simple3}), we decompose $\beta$ into $(\beta\alpha^{-1})\alpha$. We homotope the loop  $\beta\alpha^{-1}$ into an   loop $\gamma$ which has no crossings with $\alpha$, and also homotope $\alpha$ into an   path $\alpha'$ in general position with $\alpha$. By Proposition \ref{proposition_reducing} (\ref{item_composition}) and the already proved case (\ref{item_simple2}), it is sufficient to prove the theorem for the triple $(\Sigma,\alpha,\alpha')$. But this follows from the same splitting as in case (\ref{item_simple2}) and Proposition \ref{proposition_simplequasibracket} (\ref{item_uquasibracket}). Thus the proof of Theorem \ref{theorem_quasibracket} is complete.
\end{proof}

\section{Quasi-Poisson brackets on cross-sections}\label{section_cross}
\subsection{Quasi-Poisson cross-sections}\label{subsection_cross}
In this subsection we recall the quasi-Poisson cross-section theorem from \cite{AKM}.

Assume that $G$ is compact and $\bi{\cdot}{\cdot}$ is a positive-definite invariant scalar product on $\g$. Given $g\in G$, let $H$ be the stabilizer of $g$ with respect to the conjugation action of $G$ on itself, and $\mathfrak{h}$ be its Lie algebra. Then $g$ has a neighborhood $U$ in $H$ such that $U$ is a cross-section for the conjugation action, in the sense that the map $G\times _H U\rightarrow G.U, (g, h)\mapsto ghg^{-1}$ is a diffeomorphism onto its image. 

Let $T\subset G$ a maximal torus, $\mathfrak{t}$ be its Lie algebra, and $\mathfrak{A}\subset\mathfrak{t}$ be a (closed) Weyl alcove. Without loss of generality we can assume $g\in \exp(\mathfrak{A})$. A standard choice of $U$ in this case is as follows. Let $\sigma$ be the open face of $\mathfrak{A}$ such that $g\in \exp(\mathfrak{A})$. The stabilizer of any element in $\sigma$ is the same subgroup $H=G_\sigma\subset G$.  Let $V_\sigma$ be the union of all open faces $\tau$ of $\mathfrak{A}$ such that $\overline\tau\supset\sigma$. Then we can take $U=G_\sigma.\exp(V_\sigma)$. In particular, we can take $U=\exp(\mathfrak{A}^\circ)$ if $g\in\exp(\mathfrak{A}^\circ)$.

Notice that with the above choice of $U$, the stabilizer of any $h\in U$ is contained in $G_\sigma=H$. It follows that $(\Ad_h-1)|_{\mathfrak{h}^\perp}$ is invertible.

Let $(M, P, \mu)$ be a Hamiltonian quasi-Poisson $G$-manifold. By equivariance of $\mu$, $L=\mu^{-1}(U)$ is a $H$-invariant smooth submanifold of $M$ and is a cross-section of the $G$-action in the sense that
$G\times _HL\rightarrow\mu^{-1}(G.U),\  (g, m)\mapsto g.m$ is a diffeomorphism.
It follows that there is a splitting 
$$
\T M|_L=TL\oplus (L\times \mathfrak{h}^\perp).
$$
Here we identify $(m, x)\in L\times \mathfrak{h}^\perp$  with $\rho_x(m)\in\T M|_L$.

\begin{theorem}[Cross-Section Theorem]\label{theorem_cross}
\begin{enumerate}[(i)]
\item\label{item_cross1}
There is a decomposition 
$$
P|_L=P_L+P_L^\perp,
$$ 
where $P_L\in\Gamma(\bigwedge^2\T L)$ and $P^\perp_L: L\rightarrow \bigwedge^2\mathfrak{h}^\perp$. Furthermore, $P_L^\perp(m)\in\bigwedge^2\mathfrak{h}^\perp\cong\bigwedge^2(\mathfrak{h}^\perp)^*$ (here we identify $\mathfrak{h}$ with $\mathfrak{h}^*$ via $\bi{\cdot}{\cdot}_{\mathfrak{h}^\perp}$) is the following skew-symmetric bilinear form on $\mathfrak{h}^\perp$:
$$
(x,y)\mapsto -\frac{1}{2}\bi{\left(\frac{\Ad_{\mu(m)}+1}{\Ad_{\mu(m)}-1}\right)x}{y}.
$$
\item
$(L, P_L, \mu|_L)$ is a Hamiltonian quasi-Poisson $H$-manifold.
\end{enumerate}
\end{theorem}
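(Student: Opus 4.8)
The plan is to read off the entire statement from the behaviour of the sharp map $P^\sharp$ on the conormal bundle of $L$, which the moment map condition (\ref{equation_diffofhol}) controls completely. Since $U$ is a cross-section, $\mu$ is transverse to $U$, so at each $m\in L$ with $a:=\mu(m)$ one has $N^*_m L=(d\mu_m)^*(N^*_a U)$. Because the conjugation orbit directions at $a$ are $(\Ad_{a^{-1}}-1)\mathfrak{h}^\perp=\mathfrak{h}^\perp$, transversality gives $\theta(\T_a U)=\mathfrak{h}$, whence $N^*_a U$ consists exactly of the covectors $\bi{y}{\theta}_a$ with $y\in\mathfrak{h}^\perp$; thus the conormal covectors of $L$ are the $\xi_y:=\mu^*\bi{y}{\theta}$, $y\in\mathfrak{h}^\perp$. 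The first computation I would make is $P^\sharp(\xi_y)$. For arbitrary $g\in\Func{M}$, antisymmetry of $P$ and the pullback give $\bi{dg}{P^\sharp\xi_y}=-\bi{\xi_y}{P^\sharp(dg)}=-\bi{y}{\mu^*\theta(P^\sharp(dg))}$, and inserting (\ref{equation_diffofhol}) with the definition (\ref{equation_variation}) of $\chi_g$ yields $\bi{dg}{P^\sharp\xi_y}=\tfrac12\,dg\big(\rho_{(1+\Ad_a)y}(m)\big)$. As $g$ is arbitrary this forces
\begin{equation*}
P^\sharp(\xi_y)=\tfrac12\,\rho_{(1+\Ad_a)y}(m),\qquad y\in\mathfrak{h}^\perp .
\end{equation*}

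Since $(1+\Ad_a)y\in\mathfrak{h}^\perp$, the vector $P^\sharp(\xi_y)$ is vertical, so $P^\sharp(N^*_mL)\subset\rho_{\mathfrak{h}^\perp}(m)$. Pairing $P$ with a conormal covector and a covector annihilating $\mathfrak{h}^\perp$ then gives zero, i.e. $P|_L$ has no component in $\T L\wedge\mathfrak{h}^\perp$; this is the claimed decomposition $P|_L=P_L+P_L^\perp$. To identify $P_L^\perp$, I would restrict $\xi_y$ to the vertical space: using $\mu^*\theta(\rho_x(m))=(1-\Ad_a^{-1})x$ (from equivariance of $\mu$) one finds $\xi_y|_{\rho_{\mathfrak{h}^\perp}(m)}=\flat\big((1-\Ad_a)y\big)$, where $\flat:\mathfrak{h}^\perp\to(\mathfrak{h}^\perp)^*$ is the metric isomorphism. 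Writing $v=(1-\Ad_a)y$ (invertible on $\mathfrak{h}^\perp$ by hypothesis), the formula above says $(P_L^\perp)^\sharp$ sends $\flat(v)$ to $\tfrac12(1+\Ad_a)(1-\Ad_a)^{-1}v$, so under $\bigwedge^2\mathfrak{h}^\perp\cong\bigwedge^2(\mathfrak{h}^\perp)^*$ the form $P_L^\perp$ has operator $\tfrac12(1+\Ad_a)(1-\Ad_a)^{-1}=-\tfrac12\,\tfrac{\Ad_a+1}{\Ad_a-1}$, i.e. exactly $(x,y)\mapsto-\tfrac12\bi{\tfrac{\Ad_a+1}{\Ad_a-1}x}{y}$.

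For the second assertion, invariance of $P_L$ is immediate: $P$ is $G$-invariant and $L$, the splitting $\T M|_L=\T L\oplus(L\times\mathfrak{h}^\perp)$, and $P_L^\perp$ are all $H$-stable, so $P_L=P|_L-P_L^\perp$ is $H$-invariant. For the moment map property of $\mu|_L$ I would take $f\in\Func{L}$, extend it to $M$, and split $df=\eta+\zeta$ with $\zeta\in N^*_mL$ and $\eta$ annihilating $\mathfrak{h}^\perp$; since $P_L^\perp$ is vertical and $P^\sharp(N^*_mL)$ is vertical, this gives $P^\sharp(df)=P_L^\sharp(df)+(\text{vertical})$ with $P_L^\sharp(df)\in\T L$. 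Applying $\mu^*\theta$ and splitting along $\mathfrak{g}=\mathfrak{h}\oplus\mathfrak{h}^\perp$: the tangential term contributes only in $\mathfrak{h}$ through $\mu^*\theta(P_L^\sharp df)=(\mu|_L)^*\theta_H(P_L^\sharp df)$ (as $\theta_H$ is the restriction of $\theta$ to $H$), while the $\mathfrak{h}$-part of the right-hand side of (\ref{equation_diffofhol}) is $-\tfrac12(1+\Ad_a^{-1})\chi^H_f$, because $\chi^H_f$ is the $\mathfrak{h}$-projection of $\chi_f$ and $1+\Ad_a^{-1}$ preserves $\mathfrak{h}$. Matching $\mathfrak{h}$-components is then exactly the moment map equation (\ref{equation_diffofhol}) for $(L,P_L,\mu|_L)$ relative to $H$.

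The remaining and hardest point is the integrability $[P_L,P_L]=\rho_{\phi_H}$, with $\phi_H\in(\bigwedge^3\mathfrak{h})^{\mathfrak{h}}$ the Cartan trivector of $H$. The difficulty is that $P_L^\perp$ is not a genuine bivector field on $L$, so one cannot simply restrict $[P,P]=\rho_\phi$; I would extend $P_L$ off $L$ in the cross-section model $\mu^{-1}(G.U)\cong G\times_H L$, expand $[P_L+\rho_{P_L^\perp},\,P_L+\rho_{P_L^\perp}]$ along $L$ via the Schouten bracket, and isolate the purely tangential part. The plan is to combine this with two algebraic facts: that $\phi$ decomposes under $\mathfrak{g}=\mathfrak{h}\oplus\mathfrak{h}^\perp$ with $\bigwedge^3\mathfrak{h}$-component equal to $\phi_H$, and that the vertical correction $\rho_{P_L^\perp}$ produces precisely the complementary components of $\rho_\phi$, the governing identity being the $\Ad_H$-equivariance of $\tfrac{\Ad_a+1}{\Ad_a-1}$. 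I expect this bookkeeping — matching the $\mathfrak{h}^\perp$-valued terms obtained by differentiating $P_L^\perp$ against the structure constants encoded in $\phi$ — to be the main obstacle; a cleaner route is to restrict to the non-degenerate leaves, where the statement reduces to the classical symplectic cross-section theorem for the associated quasi-Hamiltonian $2$-form, and then transport the conclusion back to $P_L$.
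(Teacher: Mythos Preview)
The paper does not prove this theorem: it is stated in \S\ref{subsection_cross} with the preface ``In this subsection we recall the quasi-Poisson cross-section theorem from \cite{AKM},'' and no proof is given. There is therefore no in-paper argument to compare your proposal against; the statement is imported wholesale from Alekseev--Kosmann-Schwarzbach--Meinrenken.

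That said, your approach is sound and is essentially the argument one finds in \cite{AKM}. The computation of $P^\sharp(\xi_y)$ from the moment map condition~(\ref{equation_diffofhol}) is correct and immediately yields both the block decomposition and the explicit form of $P_L^\perp$; your derivation of the moment map property for $(L,P_L,\mu|_L)$ by projecting~(\ref{equation_diffofhol}) onto $\mathfrak{h}$ is also the standard one. You are right to flag the Schouten-bracket identity $[P_L,P_L]=\rho_{\phi_H}$ as the part requiring real work: the bookkeeping you describe (decomposing $\phi$ along $\mathfrak{g}=\mathfrak{h}\oplus\mathfrak{h}^\perp$ and matching the $\mathfrak{h}^\perp$-terms produced by differentiating $P_L^\perp$) is exactly what \cite{AKM} carry out, and your alternative of passing through the quasi-Hamiltonian $2$-form on non-degenerate leaves is also viable. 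Either route completes the proof, but neither is supplied by the present paper.
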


We will consider cross-sections of the quasi-Poisson $G^b$-manifold $M_G(\Sigma)$. From now on we put
$$
L=\bigcap_{i=1}^{b}\mu_i^{-1}(U)\subset M_G(\Sigma).
$$
Then $L$ is a smooth $H^b$-invariant submanifold of $M_G(\Sigma)$. The above theorem implies that there is a bivector field $P_L$ on $L$ so that $(L, P_L, (\mu_1|_L,\cdots, \mu_b|_L))$ is a Hamiltonian quasi-Poisson $H^b$-manifold. 

\subsection{The quasi-Poisson bracket formula for cross-sections}\label{subsection_crossbracket}
We shall deduce from Theorem \ref{theorem_quasibracket} and the Cross-Section Theorem a formula for quasi-Poisson brackets of functions of the form $\Phi_\alpha|_L$ on $L$.
This will involve an invertible linear map $\Op_h: \g\rightarrow\g$ depending on a parameter $h\in U$ defined by
$$
\Op_h=\pr_\mathfrak{h}+\frac{2}{1-\Ad_{h}}\pr_{\mathfrak{h}^\perp},
$$ 
where $\pr_\mathfrak{h}$ and $\pr_{\mathfrak{h}^\perp}$ are projections of $\g$ onto $\mathfrak{h}$ and $\mathfrak{h}^\perp$.

It is easy to see that the transpose of $\Op_h$ is 
$$
\Op_h^\top=\pr_\mathfrak{h}+\frac{2}{1-\Ad_{h}^{-1}}\pr_{\mathfrak{h}^\perp},
$$
and we have the identity
$$
\Op_h+\Op_h^\top=2.
$$


\begin{theorem}\label{theorem_crossbracket}
Let $\{\cdot,\cdot\}_L$ be the quasi-Poisson bracket defined by the quasi-Poisson structure of $L$, then
\begin{equation}\label{equation_bracket}
\brac{\Phi_\alpha}{\Psi_\beta}_{L}=\sum_{I,J=\wedge, \vee}\varepsilon(\alpha^I,\beta^J)A^{IJ}_{\Phi,\alpha,\Psi,\beta}+\sum_{q\in\alpha\#\beta}\varepsilon_q(\alpha,\beta)B^q_{\Phi,\alpha,\Psi,\beta},
\end{equation}
where $\Phi_\alpha$, $\Psi_\beta$ and $B^q_{\Phi,\alpha,\Psi,\beta}$ are the same as in Theorem \ref{theorem_quasibracket}, but restricted to $L$ here. $A^{IJ}_{\Phi,\alpha,\Psi,\beta}\in\Func{L}$ is defined as follows. If $\alpha^I$ and $\beta^J$ are at the same marked point $p_i$ then we define
$$
A^{IJ}_{\Phi,\alpha,\Psi,\beta}=\left\{
\begin{array}{ll}
\bi{\Op_{\mu_i}\Phi^I_\alpha}{\Psi^J_\beta}&\mbox{ if $\alpha^I$ is on the left of $\beta^J$ at $p_i$,}\\
 & \\
\bi{\Phi^I_\alpha}{\Op_{\mu_i}\Psi^J_\beta}&\mbox{ if $\alpha^I$ is on the right of $\beta^J$ at $p_i$;}
\end{array}
\right.
$$
otherwise we set $A^{IJ}_{\Phi,\alpha,\Psi,\beta}=0$.
\end{theorem}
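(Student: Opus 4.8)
The plan is to deduce formula (\ref{equation_bracket}) directly from Theorem \ref{theorem_quasibracket} and the Cross-Section Theorem, with all the new content concentrated in the endpoint terms. First I would build the bridge between the two brackets. Applying the decomposition $P|_L=P_L+P_L^\perp$ of Theorem \ref{theorem_cross} once for each of the $b$ factors of $G^b$ — the corrections $P_L^\perp$ for distinct factors take values in $\wedge^2\mathfrak{h}^\perp$ inside distinct copies of $\g$, hence simply add — yields, for all $f,g\in\Func{M_G(\Sigma)}$,
\[
\brac{f|_L}{g|_L}_L=\brac{f}{g}_{M_G(\Sigma)}\big|_L+\frac12\sum_{i=1}^b\bi{\frac{\Ad_{\mu_i}+1}{\Ad_{\mu_i}-1}\pr_{\mathfrak{h}^\perp}\chi^{p_i}_f}{\pr_{\mathfrak{h}^\perp}\chi^{p_i}_g}.
\]
Here I use that $P_L\in\Gamma(\wedge^2\T L)$ contracts only the $\T L$-part of $df,dg$, while $P_L^\perp$ contracts only the $\mathfrak{h}^\perp$-part, which by (\ref{equation_variation}) and the splitting $\T M|_L=\T L\oplus(L\times\mathfrak{h}^\perp)$ is represented by $\pr_{\mathfrak{h}^\perp}\chi^{p_i}_f$. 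Taking $f=\Phi_\alpha$, $g=\Psi_\beta$, expanding the first term by Theorem \ref{theorem_quasibracket} and each $\chi^{p_i}_{\Phi_\alpha}=\sum_{\alpha^I\vdash p_i}\varepsilon_{I\vee}\Phi^I_\alpha$ by Lemma \ref{lemma_formulation}, the crossing sum $\sum_q\varepsilon_q B^q$ occurs identically on both sides and cancels, so the whole statement reduces to matching the endpoint contributions.

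Second, I would record the linear algebra converting the corrected pairings into $\Op$. Writing $K_h=\frac{\Ad_h+1}{\Ad_h-1}$ on $\mathfrak{h}^\perp$, one checks that $K_h$ is skew-adjoint (since $\Ad_h$ is orthogonal and preserves $\mathfrak{h}$ and $\mathfrak{h}^\perp$ for $h\in U$) and that $\Op_h=\pr_\mathfrak{h}+(1-K_h)\pr_{\mathfrak{h}^\perp}$, $\Op_h^\top=\pr_\mathfrak{h}+(1+K_h)\pr_{\mathfrak{h}^\perp}$, in accordance with $\Op_h+\Op_h^\top=2$. Because $\Op_{\mu_i}$ is the identity on $\mathfrak{h}$, the $\mathfrak{h}$-components of the target $\sum_{I,J}\varepsilon(\alpha^I,\beta^J)A^{IJ}$ agree term by term with those of $\sum_{I,J}\varepsilon(\alpha^I,\beta^J)\bi{\Phi^I_\alpha}{\Psi^J_\beta}$, and the correction lives in $\mathfrak{h}^\perp$; so only the $\mathfrak{h}^\perp$-components remain to be identified.

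The delicate point is that this $\mathfrak{h}^\perp$-matching is \emph{not} pointwise: at a single marked point the correction carries the combinatorial sign $\varepsilon_{I\vee}\varepsilon_{J\vee}$, whereas $A^{IJ}$ is governed by the geometric left/right position of $\alpha^I,\beta^J$, and for same-type pairs ($I=J$) the naive match is off by a term $\bi{K_{\mu_i}\pr_{\mathfrak{h}^\perp}\Phi^I_\alpha}{\pr_{\mathfrak{h}^\perp}\Psi^J_\beta}$. These defects cancel only globally, once one uses the relations holding on $L$ among the boundary holonomies: for a path joining $p_i$ to $p_j$, its holonomy $c$ conjugates $\mu_j$ into $\mu_i$, so $K_{\mu_i}=-\Ad_c K_{\mu_j}\Ad_c^{-1}$ with $\Ad_c$ preserving $\mathfrak{h}$, and the $p_i$- and $p_j$-defects annihilate each other. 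To organize this cleanly I would mirror the reduction scheme of \S\ref{subsection_proofquasibracket}: verify that the right-hand side of (\ref{equation_bracket}) is invariant under the moves of Proposition \ref{proposition_reducing} (homotopy, composition, splitting), reducing to the base configurations of Proposition \ref{proposition_simplequasibracket}. The decisive base case is $\alpha$ a reversed boundary loop $\nu$ at $p_i$: there $\Hol_\nu=\mu_i$ lands in $U$ after restriction to $L$, and the cross-section analogue of the moment-map identity (\ref{equation_diffofhol}) — in which $\frac12(1+\Ad_{\mu_i}^{-1})$ is replaced by a factor built from $\Op_{\mu_i}$ — produces exactly the $\Op_{\mu_i}$-weighted pairing, with everything localized at $p_i$.

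I expect the main obstacle to be precisely this endpoint reconciliation: keeping the orientation/left-right conventions, the signs $\varepsilon_{I\vee}$, and the skew operator $K_{\mu_i}$ consistent, and arranging matters — through the splitting reduction and the boundary-loop base case — so that the genuinely non-local cancellations between the two endpoints of a path are absorbed by the on-$L$ holonomy relations rather than confronted by brute force. Everything else, namely the bridge formula and the passage through the reductions, is routine given \S\ref{subsection_proofquasibracket}.
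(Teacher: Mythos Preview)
Your bridge formula and the reduction to an endpoint identity are exactly right, but from that point on you have talked yourself into a difficulty that is not there. The endpoint matching \emph{is} term-by-term; no global cancellation, no reduction scheme, and no holonomy relation on $L$ is needed.

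The observation you are missing is purely combinatorial: if $\alpha^I$ and $\beta^J$ both lie at $p_i$ and $\alpha^I$ is on the left of $\beta^J$, then $\varepsilon(\alpha^I,\beta^J)=-\tfrac12\varepsilon_{IJ}$, while if $\alpha^I$ is on the right then $\varepsilon(\alpha^I,\beta^J)=+\tfrac12\varepsilon_{IJ}$. (Check it on the four cases $I,J\in\{\wedge,\vee\}$: reversing one endpoint direction flips both the frame orientation and the sign $\varepsilon_{IJ}$.) Consequently the operator appearing in your corrected endpoint term,
\[
\varepsilon(\alpha^I,\beta^J)\,\mathrm{id}\;+\;\tfrac12\,\varepsilon_{IJ}\,K_{\mu_i}\pr_{\mathfrak{h}^\perp},
\]
equals $\varepsilon(\alpha^I,\beta^J)\bigl(\pr_{\mathfrak h}+(1-K_{\mu_i})\pr_{\mathfrak h^\perp}\bigr)=\varepsilon(\alpha^I,\beta^J)\,\Op_{\mu_i}$ in the ``left'' case and $\varepsilon(\alpha^I,\beta^J)\,\Op_{\mu_i}^\top$ in the ``right'' case, i.e.\ exactly $\varepsilon(\alpha^I,\beta^J)A^{IJ}_{\Phi,\alpha,\Psi,\beta}$ for each $(I,J)$ separately. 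That finishes the proof in a few lines.

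Your proposed global mechanism is also incorrect on its own terms: for a path from $p_i$ to $p_j$ with holonomy $c$, there is no reason for $\mu_i$ and $\mu_j$ to be conjugate by $c$ (they are holonomies of different boundary components), so the relation $K_{\mu_i}=-\Ad_c K_{\mu_j}\Ad_c^{-1}$ fails in general, and the cancellation you describe would not occur even if it were needed.
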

By ``$\alpha^I$ is on the left of $\beta^J$ at $p_i$" is meant the following local picture, where we identify a neighborhood of $p_i$ with the upper-half plan in an orientation-preserving manner.
\begin{figure}[h] 
\centering \includegraphics[width=1.3in]{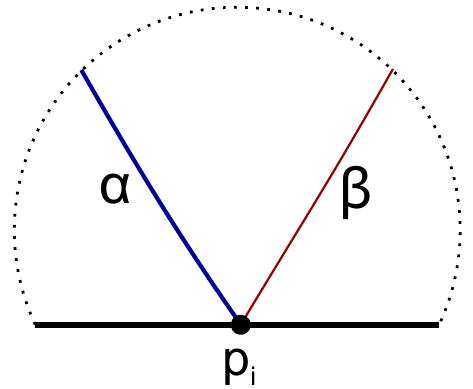} 
\end{figure}

\begin{proof}By definition of the quasi-Poisson tensor on $L$, 
\begin{align}\label{equation_bracket1}
\brac{\Phi_\alpha}{\Psi_\beta}_L&=P_L(d\Phi_\alpha, d\Psi_\beta)=P|_L(d\Phi_\alpha|_\mathfrak{h}, d\Psi_\beta|_\mathfrak{h})-P_L^\perp(d{\Phi_\alpha}|_{\mathfrak{h}^\perp}, d{\Psi_\beta}|_{\mathfrak{h}^\perp})\\
&=\{\Phi_\alpha,\Psi_\beta\}_{M_G(\Sigma)}|_L-P_L^\perp(d{\Phi_\alpha}|_{\mathfrak{h}^\perp}, d{\Psi_\beta}|_{\mathfrak{h}^\perp})\nonumber
\end{align}
 Let $\chi^{(i)}_{\Phi_\alpha}, \chi^{(i)}_{\Psi_\beta}: M_G(\Sigma)\rightarrow\g$ denote the variation maps (see (\ref{equation_variation}) for the definition) of $\Phi_\alpha$ and $\Psi_\beta$ with respect to the action of the $i$-th component of $G^b$ on $M_G(\Sigma)$. Using the expression of $P_L^\perp$ given in Theorem \ref{theorem_cross} and Lemma \ref{lemma_formulation}, we get
\begin{align*}
-P_L^\perp(d{\Phi_\alpha}|_{\mathfrak{h}^\perp}, d{\Psi_\beta}|_{\mathfrak{h}^\perp})&=\frac{1}{2}\sum_{i=1}^b\bi{\left(\frac{\Ad_{\mu_i}+1}{\Ad_{\mu_i}-1}\right)\pr_{\mathfrak{h}^\perp}(\chi_{\Phi_\alpha}^{(i)})}{\pr_{{\mathfrak{h}^\perp}}(\chi_{\Psi_\beta}^{(i)})}\\
&=\frac{1}{2}\sum_{i=1}^b\sum_{I,J}\varepsilon_{IJ}\bi{\left(\frac{\Ad_{\mu_i}+1}{\Ad_{\mu_i}-1}\right)\pr_{\mathfrak{h}^\perp}(\Phi^I_\alpha)}{\Psi^J_\beta},
\end{align*}
where for any fixed $i$ the summation ``$\sum_{I,J}$" runs over symbols $I, J=\wedge, \vee$ such that both $\alpha^I$ and $\beta^J$ lie on $p_i$

Inserting the the above equality and Theorem \ref{theorem_quasibracket} into (\ref{equation_bracket1}), we get
\begin{align}\label{equation_bracket2}
&\brac{\Phi_\alpha}{\Psi_\beta}_L=\sum_{q\in\alpha\#\beta}\varepsilon_q(\alpha,\beta)B^q_{\Phi,\alpha,\Psi,\beta}\\
+\sum_{i=1}^b&\sum_{I,J}\bi{\left[ \varepsilon_i(\alpha^I,\beta^J)+\frac{1}{2}\varepsilon_{IJ}\left( \frac{\Ad_{\mu_i}+1}{\Ad_{\mu_i}-1}\right)\pr_{{\mathfrak{h}^\perp}}\right] \Phi^I_\alpha}{\Psi^J_\beta},\nonumber
\end{align}
where $\varepsilon_i(\alpha^I,\beta^J)=0, \pm\frac{1}{2}$ is the oriented intersection number of $\alpha^I$ and $\beta^J$ at $p_i$. Namely, $\varepsilon_i(\alpha^I, \beta^J)=\varepsilon(\alpha^I,\beta^J)$ if both $\alpha^I$ and $\beta^J$ lie on $p_i$ and $\varepsilon_i(\alpha^I, \beta^J)=0$ otherwise.

It is easy to see that if $\alpha^I$ and $\beta^J$ both lie on $p_i$ and $\alpha^I$ is on the left of $\beta^J$, then $\varepsilon_i(\alpha^I,\beta^J)=-\frac{1}{2}\varepsilon_{IJ}$. This implies
\begin{align*}
\varepsilon_i(\alpha^I,\beta^J)+\frac{1}{2}\varepsilon_{IJ}\left( \frac{\Ad_{\mu_i}+1}{\Ad_{\mu_i}-1}\right)\pr_{\mathfrak{h}^\perp}&=\varepsilon_i(\alpha^I,\beta^J)\left(\pr_\mathfrak{h}+\pr_{\mathfrak{h}^\perp}-\frac{\Ad_{\mu_i}+1}{\Ad_{\mu_i}-1}\pr_{\mathfrak{h}^\perp}\right)\\
&=\varepsilon_i(\alpha^I,\beta^J)\Op_{\mu_i}.
\end{align*}
Similarly, if $\alpha^I$ is on the right of $\beta^J$,  then we have $\varepsilon_i(\alpha^I,\beta^J)=\frac{1}{2}\varepsilon_{IJ}$ and 
$$
\varepsilon_i(\alpha^I,\beta^J)+\frac{1}{2}\varepsilon_{IJ}\left( \frac{\Ad_{\mu_i}+1}{\Ad_{\mu_i}-1}\right)\pr_{\mathfrak{h}^\perp}=\varepsilon_i(\alpha^I,\beta^J)\Op_{\mu_i}^\top.
$$
Inserting these into (\ref{equation_bracket2}), we get the required equality (\ref{equation_bracket}).

\end{proof}

\bibliographystyle{plain} 
\bibliography{surface_groupoid}
\end{document}